\newcommand{\blind}{0}
\global\let\AddToReset=\@addtoreset}
\newtheorem{theorem}{Theorem}[section]
\newtheorem{@definition}{\sc Definition}[section]
\newtheorem{@remark}{\sc Remark}[section]
\newtheorem{exemple}{\sc Example}[section]
\newcommand{\beqn}{\begin{displaymath}}
\newcommand{\eeqn}{\end{displaymath}}
\newcommand{\beq}{\begin{equation}}  
\newcommand{\eeq}{\end{equation}}
\def\mathsf{\bf}
\def\N{\mathbb{N}}
\def\R{\mathbb{R}}
\def\P{\mathrm P}
\def\text{\mbox}
\def\1{{\bf 1}}
\def\simn{\renewcommand{\arraystretch}{0.5}
\begin{array}[t]{c}
\stackrel{}{\sim} \\
{\scriptstyle n\rightarrow\infty}
\end{array}\renewcommand{\arraystretch}{1}}
\def\limiteloin{\renewcommand{\arraystretch}{0.5}
\begin{array}[t]{c}
\stackrel{{\cal D}}{\longrightarrow} \\
{\scriptstyle n \rightarrow\infty}
\end{array}\renewcommand{\arraystretch}{1}}
\def\limiteproban{\renewcommand{\arraystretch}{0.5}
\begin{array}[t]{c}
\stackrel{\P}{\longrightarrow} \\
{\scriptstyle n \rightarrow\infty}
\end{array}\renewcommand{\arraystretch}{1}}
\def\limiten{\renewcommand{\arraystretch}{0.5}
\begin{array}[t]{c}
\stackrel{}{\longrightarrow} \\
{\scriptstyle n\rightarrow\infty}
\end{array}\renewcommand{\arraystretch}{1}}
\def\limiteasn{\renewcommand{\arraystretch}{0.5}
\begin{array}[t]{c}
\stackrel{a.s.}{\longrightarrow} \\
{\scriptstyle n\rightarrow\infty}
\end{array}\renewcommand{\arraystretch}{1}}
\def\limitet{\renewcommand{\arraystretch}{0.5}
\begin{array}[t]{c}
\stackrel{}{\longrightarrow} \\
{\scriptstyle t\rightarrow\infty}
\end{array}\renewcommand{\arraystretch}{1}}
\def\limitet0{\renewcommand{\arraystretch}{0.5}
\begin{array}[t]{c}
\stackrel{}{\longrightarrow} \\
{\scriptstyle t\rightarrow 0}
\end{array}\renewcommand{\arraystretch}{1}}
\newtheorem{rem}{Remark}
\newtheorem{prop}{Proposition}
\date{}
\begin{document}
\def\spacingset#1{\renewcommand{\baselinestretch}%
{#1}\small\normalsize} \spacingset{1}

\if0\blind
{\title{\bf A new non-parametric detector of univariate outliers for distributions with unbounded support}
\author{Jean-Marc Bardet \\ 
and \\
Solohaja-Faniaha Dimby\thanks{
    The authors gratefully acknowledge the enterprise Autobiz}\hspace{.2cm}\\S.A.M.M., Universit\'{e} de Paris 1 Panth\'eon-Sorbonne\\90, rue de Tolbiac, 75634, Paris, France}
\maketitle

} \fi
{
\bigskip
\begin{abstract}
The purpose of this paper is to construct a new non-parametric detector of univariate outliers  and to study its asymptotic properties. This detector is based on a Hill's type statistic. It satisfies a unique asymptotic behavior for a large set of probability distributions with positive unbounded support (for instance: for the absolute value of Gaussian, Gamma, Weibull, Student or regular variations distributions). We have illustrated our results by numerical simulations which show the accuracy of this detector with respect to other usual univariate outlier detectors (Tukey, MAD or Local Outlier Factor detectors). The detection of outliers in a database providing the prices of used cars is also proposed as an application to real-life database.
\end{abstract}

\noindent%
{\it Keywords:} Outlier detection; order statistics; Hill estimator; non-parametric test.


 \thispagestyle{empty}
\newpage
\spacingset{1.45} 

\section{Introduction}
Let $(X_1,\cdots,X_n)$ be a sample of positive, independent, identically distributed random variables with unbounded distribution. This article aims to provide a non-parametric outlier detector among the "large" values of $(X_1,\cdots,X_n)$.
\begin{rem}
If we wish to detect outliers among the "small" values of $(X_1,\cdots,X_n)$, it would be possible to consider $\max(X_1,\cdots,X_n)-X_i$ instead of $X_i$, for $i=1,\cdots, n$. Moreover, if $X_i$, $i=1,\cdots,n$, are not positive random variables, as in the case of quantile regression residuals, we can consider $|X_i|$ instead of $X_i$. 
\end{rem}
There are numerous outlier detectors in such a framework. Generally, such detectors consist of statistics directly applied to each observation, deciding if this observation can be considered or not as an outlier (see for instance the books of Hawkins, 1980, Barnett and Lewis, 1994, Rousseeuw and Leroy, 2005, or the article of Beckman and Cook, 1983). The most frequently used, especially in the case of regression residuals, is the Student-type detector (see a more precise definition in Section \ref{simu}). However, it is a parametric detector that is theoretically defined for a Gaussian distribution. Another well-known detector is the robust Tukey detector (see for example Rousseeuw and Leroy, 2005). Its confidence factor is computed from quartiles of the Gaussian distribution, though it is frequently used for non-Gaussian distributions. Finally, we can also cite the $MAD_e$ detector using a confidence factor computed from the median of absolute value of Gaussian distribution (see also Rousseeuw and Leroy, 2005).  \\
Hence all the most commonly used outlier detectors are based on Gaussian distribution and they are not really accurate for heavier distributions (for regression residuals, we can also cite the Grubbs-Type detectors introduced in Grubbs, 1969, extended in Tietjen and Moore, 1972). Such a drawback could be avoided by considering a non-parametric outlier detector. However, in the literature there are few non-parametric outlier detectors. We could cite the Local Outlier Factor (LOF) introduced in Breunig {\it et al.} (2000), which is also valid for
multivariate outliers. For any integer $k$, the LOF algorithm compares the density of each point to the density of its $k$-closest neighbors. 
Unfortunately a theoretical or numerical procedure for choosing the number $k$ of cells and its associated threshold  still does not exist. There are also other detectors essentially based on a classification methodology (for instance: Knorr {\it et al.}, 2000) or robust statistics (for instance:  Hubert and Vendervieren, 2008, for univariate skewed data and Hubert and Van der Veeken, 2008, for multivariate ones). \\
An interesting starting point for defining a non-parametric detector of outlying observations is provided by the order statistics $(X_{(1)},\ldots, X_{(n)})$ of $(X_1,\cdots,X_n)$. Thus, Tse and Balasooriya (1991) introduced a detector based on increments of order statistics, but only for the exponential distribution.  Recently, a procedure based on the Hill estimator was also developed for detecting influential data points in Pareto-type distributions (see Hubert {\it et al.}, 2012). The Hill estimator (see Hill, 1975) has been defined from the following property: 
the family of r.v. $\big (j\, \big (\log(X_{(n-j+1)})-\log (X_{(n-j)})\big )\big )_{1\leq j\leq k(n)}$
 is asymptotically (when $\min \big (k(n)\ , \ n-k(n) \big ) \limiten  \infty$) a sample of independent r.v. following exponential distributions (R\'enyi exponential representation) for distributions in the max-domain of attraction of $G_\gamma$ where $G_\gamma$ is the cumulative distribution function of the extreme value distribution (see Beirlant {\it et al.}, 2004).  \\
Here we will use  an extension of this property for detecting a finite number of outliers among the sample $(X_1,\cdots,X_n)$. Indeed, an intuitive idea for detecting them is the following: the presence of outliers generates a jump in the family of r.v. $\big ( X_{(n-j+1)}/X_{(n-j)}\big )_j$, therefore we also see a jump in the family of the r.v. $\big (\log(X_{(n-j+1)})-\log (X_{(n-j)})\big )_j$. Thus an outlying data detector can be obtained when the maximum of this family exceeds a threshold (see details in (\ref{testT}) or (\ref{D})). In the sequel, see some assumptions on probability distributions for applying this new test of outlier presence. This also provides an estimator of the number of outliers. It is relevant to say that this test is not only valid for Pareto-type distribution (for instance Pareto, Student or Burr probability distributions), but more generally to a class of regular variations distributions. It can also be applied to numerous probability distributions with an exponential decreasing probability distribution function (such as Gaussian, Gamma or Weibull distributions). So our new outlier detector is a non-parametric estimator defined from an explicit threshold, which does not require any tuning parameter and can be applied to a very large family of probability distributions. \\
Numerous Monte-Carlo experiments carried out  in the case of several probability distributions attest to the accuracy of this new detector. It is compared to other famous outlier detectors or extended versions of these detectors and the simulation results obtained by this new detector are convincing especially since it ignores false outliers. Moreover, an application to real-life data (price, mileage and age of used cars) is done, allowing the detection of two different kinds of outliers. \\
~\\
We have drafted our paper along following lines. Section \ref{Main} contains the definitions and several probabilistic results while Section \ref{stat} describes how to use them to build a new outlier detector.  Section \ref{simu} is devoted to Monte-Carlo experiments, Section \ref{applis} presents the results of the numerical application on used car variables and the proofs of this paper are to be found in Section \ref{proofs}.
\section{Definition and first probabilistic results} \label{Main} 
For $(X_1,\cdots,X_n)$ a sample of positive i.i.d.r.v. with unbounded distribution, define:
\begin{eqnarray}\label{G}
G(x)=\P (X_1 >x)\qquad \mbox{for $x\in \R$.} 
 \end{eqnarray}
It is clear that $G$ is a decreasing function  and $G(x) \to 0$ when $x \to \infty$. Hence, define also the pseudo-inverse function of $G$ by  
\begin{eqnarray}\label{G1}
G^{-1}(y)=\sup \{x \in \R,~G(x)\geq y\}\qquad y\geq 0.
\end{eqnarray}
$G^{-1}$ is also a decreasing function. Moreover, if the support of the probability distribution of $X_1$ is unbounded then $G^{-1}(x) \to \infty$ when $x \to 0$. \\
~\\
Now, we consider both the following spaces of functions:
\begin{itemize}
\item $A_1=\Big \{ f:\,[0,1] \to \R$, such as for any $\alpha >0$, $\displaystyle f(\alpha x)=f_1(x)\Big (1+\frac {f_2(\alpha)}{\log(x)}+O\big (\frac 1 {\log^2(x)}\big ) \Big )$ when $x \to 0$ 
where $f_1:\,[0,1] \to \R$ satisfies $\lim_{x \to 0}f_1(x)=\infty$ and $f_2$ is a ${\cal C}^1([0,\infty))$ diffeomorphism $\Big \}$.
\item $A_2=\Big \{ g:\,[0,1] \to \R$, there exist $a>0$ and a  function $g_1:\,[0,1] \to \R$ satisfying $\lim_{x\to 0}g_1(x)=\infty$, and for all $\alpha >0$, $g(\alpha x)=\alpha^{-a} \, g_1(x)\, \big (1+O\big (\frac 1 {\log(x)}\big ) \big )~\mbox{when $x \to 0$}\Big \}$.
\end{itemize}
\begin{exemple}
We will show below that numerous famous "smooth" probability  distributions such as absolute values  of Gaussian, Gamma or Weibull distributions satisfy  $G^{-1} \in A_1$. Moreover, numerous heavy-tailed distributions such as Pareto, Student or Burr distributions have $G^{-1} \in A_2$. 
\end{exemple}
Using the order statistics $X_{(1)} \leq X_{(2)} \leq \cdots \leq X_{(n)}$, define the following ratios $(\tau_j)$ by:
\begin{eqnarray}
\label{rapport}& &\displaystyle \tau_j=\frac{X_{(j+1)}}{X_{(j)}} \quad \mbox{if $X_{(j)}>0$, and $\tau_j=1$ if not, for any $j=1,\cdots,n-1$} \\
\label{rapport2} & &\displaystyle  \tau'_j=(\tau_j-1) \log (n)\qquad \mbox{for any $j=1,\cdots,n-1$}
\end{eqnarray}
In the sequel, we are going to provide some probabilistic results on the maximum of these ratios.
\begin{prop}\label{prop1}
Assume $G^{-1} \in A_1$. Then, for any $J \in \N^*$, and with $(\Gamma_i)_{i \in \N^*}$ a sequence of r.v. satisfying $\Gamma_i=E_1+\cdots+E_i$ for $i \in \N^*$ where $(E_i)_{i\in \N^*}$ is a sequence of i.i.d.r.v. with exponential distribution of parameter $1$,
\begin{eqnarray}\label{asympto1}
\max_{j=1,\cdots,J} \{\tau_{n-j}'\} \limiteloin \max_{k=1, \cdots,J} \big \{ f_2 (\Gamma_{k+1})-f_2 (\Gamma_{k}) \big \}.
\end{eqnarray}
Here $f_2$ corresponds to $G^{-1}$ in the manner described in the definition
of $A_1$.
\end{prop}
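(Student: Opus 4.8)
The plan is to reduce, via the Rényi representation of order statistics, the left-hand side of (\ref{asympto1}) to an elementary deterministic limit for the ratios $G^{-1}(\Gamma_j/\Gamma_{n+1})/G^{-1}(\Gamma_{j+1}/\Gamma_{n+1})$, into which one then plugs the quasi-power expansion defining $A_1$; the decisive feature is that the ``dominant'' factor $f_1$ occurs in both numerator and denominator and cancels, so that only the $f_2$-contribution survives. Concretely, I would first take i.i.d.\ standard exponential variables $(E_i)_{i\ge1}$ with partial sums $\Gamma_i=E_1+\cdots+E_i$, and recall two classical facts: $X_1\stackrel{d}{=}G^{-1}(U)$ for $U$ uniform on $[0,1]$ (probability integral transform of the survival function $G$), and, $G^{-1}$ being non-increasing, the $j$-th largest of $n$ i.i.d.\ copies of $X_1$ is a function of the $j$-th smallest of $n$ i.i.d.\ uniforms; combined with the Rényi representation $(U_{(1)},\dots,U_{(n)})\stackrel{d}{=}(\Gamma_1/\Gamma_{n+1},\dots,\Gamma_n/\Gamma_{n+1})$ this gives, for every fixed $n$ and jointly in $j$,
\[
\bigl(X_{(n)},X_{(n-1)},\dots,X_{(n-J)}\bigr)\ \stackrel{d}{=}\ \Bigl(G^{-1}\bigl(\Gamma_1/\Gamma_{n+1}\bigr),\dots,G^{-1}\bigl(\Gamma_{J+1}/\Gamma_{n+1}\bigr)\Bigr).
\]
Since only the law of $\max_{1\le j\le J}\tau'_{n-j}$ is at stake, I would work from here on the probability space carrying $(E_i)_{i\ge1}$ and replace $\tau_{n-j}$ by $G^{-1}(\Gamma_j/\Gamma_{n+1})/G^{-1}(\Gamma_{j+1}/\Gamma_{n+1})$ (same joint law in $j$). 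On this space $x_n:=1/\Gamma_{n+1}\to0$ a.s.\ and, by the strong law of large numbers, $\Gamma_{n+1}/n\to1$ a.s., so $\log\Gamma_{n+1}=\log n+o(1)$ a.s.; moreover $G^{-1}(\Gamma_{j+1}/\Gamma_{n+1})\to\infty$, so the convention $\tau_j=1$ in (\ref{rapport}) is irrelevant for $n$ large.

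Next I would freeze $\omega$ in the above almost sure event, so that $\Gamma_1(\omega),\dots,\Gamma_{J+1}(\omega)$ are deterministic positive numbers, and apply the defining expansion of $A_1$ with $\alpha=\Gamma_j(\omega)$ and then with $\alpha=\Gamma_{j+1}(\omega)$, along the deterministic sequence $x=x_n\to0$. The $f_1(x_n)$ factors cancel and, since the denominator tends to $1$,
\[
\tau_{n-j}=\frac{G^{-1}(\Gamma_j x_n)}{G^{-1}(\Gamma_{j+1}x_n)}=1+\frac{f_2(\Gamma_j)-f_2(\Gamma_{j+1})}{\log x_n}+O\Bigl(\frac1{\log^2 x_n}\Bigr).
\]
As $\log x_n=-\log\Gamma_{n+1}=-\log n+o(1)$, this reads $\tau_{n-j}-1=\bigl(f_2(\Gamma_{j+1})-f_2(\Gamma_j)\bigr)/\log\Gamma_{n+1}+O(1/\log^2 n)$, whence $\tau'_{n-j}=(\tau_{n-j}-1)\log n=\frac{\log n}{\log\Gamma_{n+1}}\bigl(f_2(\Gamma_{j+1})-f_2(\Gamma_j)\bigr)+O(1/\log n)$, which tends to $f_2(\Gamma_{j+1}(\omega))-f_2(\Gamma_j(\omega))$ as $n\to\infty$, for every $j\in\{1,\dots,J\}$ simultaneously. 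Hence the whole vector $(\tau'_{n-1},\dots,\tau'_{n-J})$ converges almost surely; applying the continuous map $(y_1,\dots,y_J)\mapsto\max_{1\le j\le J}y_j$ and using that almost sure convergence implies convergence in distribution gives precisely (\ref{asympto1}), the limiting maximum over $k$ being obtained by relabelling $k=j$.

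The only point requiring genuine care is the use of the $A_1$-expansion in the second step: that expansion is asserted only for each fixed $\alpha>0$, with an $O(\cdot)$ constant that may depend on $\alpha$, whereas here the ``scale'' $\alpha=\Gamma_j$ is random. This is exactly why I move to the almost sure coupling above: with $\omega$ frozen, each $\Gamma_j(\omega)$ is a deterministic positive number, so the expansion applies along the deterministic sequence $x_n\to0$ with a remainder free of $n$ (it may depend on $\omega$, but that is harmless for a pointwise-in-$\omega$ limit). What then remains is routine bookkeeping: that the two $O(1/\log^2 x_n)$ terms from numerator and denominator combine into a single $O(1/\log^2 n)$ after dividing (immediate, the denominator tending to $1$ and $f_1(x_n)$ being nonzero for $n$ large), that $\Gamma_{j+1}/\Gamma_{n+1}\le1$ for $n>J$ so that $G^{-1}$ is evaluated on $[0,1]$, and the elementary remark that $f_1$ does not depend on $\alpha$, which is what legitimates the cancellation.
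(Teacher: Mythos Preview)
Your argument is correct and follows essentially the same route as the paper: the R\'enyi/Embrechts representation \eqref{emb}, the $A_1$-expansion applied to $G^{-1}(\Gamma_j/\Gamma_{n+1})$ so that $f_1$ cancels in the ratio, the resulting expansion $\tau_{n-j}=1+(f_2(\Gamma_{j+1})-f_2(\Gamma_j))/\log\Gamma_{n+1}+O(1/\log^2\Gamma_{n+1})$, and the continuous mapping theorem for the maximum. The one genuine addition you make is the ``freeze $\omega$'' step to justify using the $A_1$-expansion with the random scale $\alpha=\Gamma_j$: the paper simply writes the expansion as if it held uniformly, whereas you observe that on the a.s.\ event $\{\Gamma_{n+1}\to\infty\}$ each $\Gamma_j(\omega)$ is a fixed positive number and $x_n(\omega)=1/\Gamma_{n+1}(\omega)\to0$ is a deterministic sequence, so the pointwise-in-$\alpha$ hypothesis suffices. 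This is a legitimate clarification of a point the paper leaves implicit, not a different method.
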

\noindent Now, we consider a particular case of functions belonging to $A_1$. Let $A'_1$ be the following function space:
$$
A'_1=\big \{ f \in A_1\mbox{ and there are $C_1, C_2 \in \R$ satisfying $f_2(\alpha)=C_1+C_2 \log \alpha$ for all $\alpha >0$} \big \}.
$$
\begin{exemple}
Here there are some examples of classical probability distributions satisfying $G^{-1} \in A'_1$:
\begin{itemize}
\item {\bf Exponential distribution ${\cal E}(\lambda)$:} In this case,  $G^{-1}(x)=-\frac 1 \lambda \,\log (x)$, and this implies $G^{-1} \in A'_1$ with $f_1(x)=-\frac 1 \lambda \, \log (x)$ and $f_2(\alpha)=\log \alpha$ ($C_1=0$ and $C_2=1$). 
\item {\bf Gamma distributions $\Gamma(a)$} In this case, $G(x)=\frac 1 {\Gamma(a)} \int_x^\infty t^{a-1}e^{-t} dt$ for $a\geq 1$ and we obtain, using an asymptotic expansion of the incomplete gamma function (see Abramowitz and Stegun, 1964):
$$
G^{-1}(x)=\frac 1 {\Gamma(a)} \, \big (-\log x +(a-1)\log (-\log x)\big ) + O(|(\ln x )^{-1} |)\qquad x \to 0.
$$
As a consequence, we deduce $G^{-1} \in A'_1$ with
$$
f_1(x)= \frac 1 {\Gamma(a)} \, \big (-\log x +(a-1)\log (-\log x)\big )\quad \mbox{and} \quad f_2(\alpha)=\log \alpha~~ \mbox{($C_1=0$ and $C_2=1$)}.
$$ 
\item {\bf Absolute value of standardized Gaussian distribution $|{\cal N}(0,1)|$:} In this case, we can write  $G(x)=\frac 2 {\sqrt{2\pi}} \int_x^\infty e^{-t^2/2} dt=\mbox{erfc}(x/\sqrt 2)$, where $erfc$ is the complementary Gauss error function. But we know (see for instance Blair {\it et al.}, 1976) that for $x \to 0$, then $\mbox{erfc}^{-1}(x) =\frac 1 {\sqrt 2} \,  \Big ( -\log(\pi x^2)-\log (-\log x) \Big )^{1/2}+ O(|(\ln x )^{-1}| )$. As a consequence, for any $\alpha>0$,
\begin{equation}\label{erfc}
\mbox{erfc}^{-1}(\alpha \, x) =\mbox{erfc}^{-1}(x) \Big ( 1 + \frac {\log \alpha } {2 \log x } +  O(|(\ln x )^{-2}| )  \Big )\qquad x \to 0.
\end{equation}
Consequently $G^{-1} \in A'_1$ with
$$
f_1(x)= \sqrt { -2\log x -\log (-\log x)-2 \log \pi }\qquad \mbox{and} \qquad f_2(\alpha)=\frac 1 2 \log \alpha,
$$
implying $C_1=0$ and $C_2=\frac 1 2$.
\item {\bf Weibull distributions:} In this case, with $a\geq 0$ and $0< b\leq 1$, $G(x)=e^{-(x/\lambda)^k}$ with $\lambda>0$ and $k \in \N^*$, for $x\geq 0$. Then it is obvious that $G^{-1}(x) =\lambda \big ( - \log x \big )^{1/k}$ and therefore $G^{-1} \in A_1'$ with $f_1(x)=\lambda \big ( - \log x \big )^{1/k}$ and $f_2(\alpha)=\frac 1 k \, \log \alpha$ (implying $C_1=0$ and $C_2 =1/k$).

\end{itemize} 
\end{exemple}
When $G^{-1} \in A'_1$, it is possible to specify the limit distribution of  \eqref{asympto1}. Thus, we show the following result:
\begin{prop} \label{prop2}
Assume that $G^{-1} \in A'_1$. Then 
\begin{eqnarray}\label{asympto2}
\P \Big (\max_{j=1,\cdots,J} \{\tau_{n-j}'\} \leq x \Big )  \limiten \prod_{j=1}^{J} \big ( 1 - e^{-jx/C_2}\big ).
\end{eqnarray}
\end{prop}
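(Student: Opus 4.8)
The plan is to derive Proposition \ref{prop2} from Proposition \ref{prop1} by making the limiting law in \eqref{asympto1} explicit in the case where $f_2$ has the affine shape permitted in $A'_1$. Since $G^{-1}\in A'_1\subset A_1$, Proposition \ref{prop1} applies and its limiting distribution function will turn out to be continuous, so it suffices to compute, for $x\in\R$,
\[
\P\Big(\max_{k=1,\cdots,J}\big\{f_2(\Gamma_{k+1})-f_2(\Gamma_k)\big\}\le x\Big).
\]
Using $f_2(\alpha)=C_1+C_2\log\alpha$ (with $C_2>0$ in all the relevant examples), each increment becomes $f_2(\Gamma_{k+1})-f_2(\Gamma_k)=C_2\log(\Gamma_{k+1}/\Gamma_k)$. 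Since $\Gamma_{k+1}>\Gamma_k$ almost surely, every increment is positive, so the probability above vanishes for $x\le 0$; for $x>0$ the event rewrites as $\bigcap_{k=1}^{J}\{\Gamma_k/\Gamma_{k+1}\ge e^{-x/C_2}\}$.

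The key probabilistic ingredient is then the joint law of the ratios $W_k:=\Gamma_k/\Gamma_{k+1}$, $k=1,\dots,J$. I would establish the classical fact that $W_1,\dots,W_J$ are mutually independent (and independent of $\Gamma_{J+1}$), with $\P(W_k\le w)=w^k$ for $w\in[0,1]$, i.e.\ $W_k\sim{\rm Beta}(k,1)$. This follows from the joint density of $(\Gamma_1,\dots,\Gamma_{J+1})$, equal to $e^{-s_{J+1}}\mathbf 1_{\{0<s_1<\dots<s_{J+1}\}}$, via the change of variables $(s_1,\dots,s_{J+1})\mapsto(w_1,\dots,w_J,r)$ with $w_k=s_k/s_{k+1}$ and $r=s_{J+1}$: the triangular relation $s_k=w_k s_{k+1}$ gives Jacobian $\prod_{k=1}^{J}s_{k+1}=r^{J}\prod_{i=2}^{J}w_i^{i-1}$, so the transformed density factors as $\big(\prod_{i=1}^{J}i\,w_i^{i-1}\big)\cdot\big(r^{J}e^{-r}/J!\big)$ on $(0,1)^{J}\times(0,\infty)$ — a product of ${\rm Beta}(k,1)$ marginals for the $W_k$ and a ${\rm Gamma}(J+1)$ marginal for $\Gamma_{J+1}$. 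Equivalently, one may simply quote the R\'enyi--Malmquist representation of uniform order statistics, observing that $W_k=U_{(k)}/U_{(k+1)}$.

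Putting the two pieces together would give, for $x>0$,
\[
\P\Big(\max_{k=1,\cdots,J}\big\{f_2(\Gamma_{k+1})-f_2(\Gamma_k)\big\}\le x\Big)=\prod_{k=1}^{J}\P\big(W_k\ge e^{-x/C_2}\big)=\prod_{k=1}^{J}\big(1-e^{-kx/C_2}\big),
\]
which, combined with \eqref{asympto1}, is exactly \eqref{asympto2} (the limit function being continuous, convergence in distribution transfers to pointwise convergence of distribution functions). The one delicate point is the change-of-variables step establishing the independence and the power-law marginals of the $W_k$ (or, alternatively, the correct bookkeeping when invoking the R\'enyi--Malmquist representation, together with the index shift between $\tau'_{n-j}$ and $f_2(\Gamma_{k+1})-f_2(\Gamma_k)$); once this is in hand the conclusion is a one-line product of independent tail probabilities.
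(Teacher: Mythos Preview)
Your proof is correct and reaches the same conclusion, but the route differs from the paper's in one essential respect. Both arguments start from Proposition \ref{prop1} and reduce to computing the law of $\max_k C_2\log(\Gamma_{k+1}/\Gamma_k)$. The paper then invokes \eqref{emb} a second time (for the exponential distribution) to identify
\[
\big(C_2\log(\Gamma_{k+1}/\Gamma_k)\big)_{1\le k\le J}\stackrel{\cal D}{=}C_2\big(E'_{(1)},\,E'_{(2)}-E'_{(1)},\,\dots,\,E'_{(J)}-E'_{(J-1)}\big)
\]
with exponential order statistics, and obtains the product $\prod_{j}(1-e^{-jy})$ by evaluating a $J$-fold iterated integral that telescopes step by step. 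You bypass both the order-statistics detour and the integral by proving directly (via the triangular change of variables, or Malmquist--R\'enyi) that the ratios $W_k=\Gamma_k/\Gamma_{k+1}$ are independent with $\P(W_k\le w)=w^k$; the product form then falls out in one line as $\prod_k\P(W_k\ge e^{-x/C_2})$. Your argument is shorter and more transparent; the paper's has the minor expository benefit of staying within the exponential-spacings framework that underlies the Hill estimator elsewhere in the article, at the price of the explicit integration.
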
 
Such a result is interesting since it provides the asymptotic behavior of a vector of normalized and centered ratios $\tau_i$. Its asymptotic distribution is the distribution of a vector of independent exponentially distributed r.v's. However the parameters of these exponential distributions are different. Thus, if we consider the statistic  
\begin{eqnarray}\label{testT}
\widehat T= \max_{j=1,\cdots,J} \{\tau'_{n-j}\},
\end{eqnarray} 
the computation of the cumulative distribution function of $\widehat T$ requires consideration of the function $y \in [0,\infty) \mapsto R(y)=\prod_{j=1}^{J} \big ( 1 - e^{-jy}\big )$. This function converges quickly to $1$ when $J$ increases. Hence we numerically obtain that for $J\geq 3$, $R(3.042) \simeq 0.95$. Then we deduce from \eqref{asympto2} and \eqref{testT} that for $J\geq 3$, 
$$
\P \big ( \widehat T \leq  3.042 \times C_2  \big ) \simeq 0.95.
$$
This implies that for instance that for $J\geq 3$ and $n$ large enough,
\begin{eqnarray*}
&\bullet& \P\Big (\widehat T \leq 3.042\Big)\simeq 0.95\qquad \mbox{when  $X$ follows a Gamma distribution} \\
&\bullet& \P\Big(\widehat T \leq 1.521\Big)\simeq 0.95\qquad \mbox{when $|X|=|{\cal N}(0,1)|$,}
\end{eqnarray*}
with the computation of $C_2$  for each distribution. We remark that the ratio $\tau'_{n-1}$ is the main contributor to the statistic $\widehat T$ and it contains almost all the information. For giving  equivalent weights to the other ratios $\tau'_{k}$, $k \leq n-1$ and  so as not to be troubled by the nuisance parameter $C_2$, it is necessary to modify the statistic $\widehat T$. Then we consider: 
\begin{eqnarray}\label{testT2}
\widetilde T_n=  \max_{j=1,\cdots,J} \big \{j\, \tau'_{n-j} \big \}\times \frac 1 {\overline{s}_J}  \qquad \mbox{where}\qquad 
\overline{s}_J=\frac 1 {J} \, \sum_{j=1}^{J} j\,  \tau'_{n-j}. 
\end{eqnarray}
For $(u_n)_n$ and $(v_n)_n$ two sequences of real numbers, denote $u_n \simn v_n$ when $u_n/v_n \limiten 1$. The following proposition can be established:
\begin{prop}\label{prop3}
Assume that $G^{-1} \in A_1'$. Then, for a sequence  $(J_n)_n$ satisfying  $J_n \limiten \infty$ and $J_n/\log n \limiten 0$,
\begin{eqnarray}\label{asympto3}
\Pr \big ( \widetilde T_n \leq x \big )  \simn \big (1 -e^{-x} \big )^{J_n} \quad \mbox{for $x>0$}.
\end{eqnarray}
\end{prop}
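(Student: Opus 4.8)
The plan is to carry the statement into the exponential (``Rényi'') model already supporting Propositions~\ref{prop1} and~\ref{prop2}, and to identify $\widetilde T_n$ with a scale-invariant functional of a \emph{growing} number of i.i.d.\ standard exponentials. Let $(E_i)_{i\ge1}$ be i.i.d.\ $\mathcal{E}(1)$, $\Gamma_k=E_1+\cdots+E_k$. The quantile transform gives the joint identity in law $\tau_{n-j}\stackrel{\mathcal{L}}{=}G^{-1}(\Gamma_j/\Gamma_{n+1})/G^{-1}(\Gamma_{j+1}/\Gamma_{n+1})$ for $1\le j\le n-1$. Writing $\Gamma_j/\Gamma_{n+1}=\alpha_j(\Gamma_{j+1}/\Gamma_{n+1})$ with $\alpha_j=\Gamma_j/\Gamma_{j+1}\in(0,1]$ and inserting $f_2(\alpha)=C_1+C_2\log\alpha$ into the expansion defining $A_1$, the $f_1$-factors and the $C_1$-terms cancel in the ratio, leaving, uniformly in $1\le j\le J_n$,
\[
\tau_{n-j}-1=\frac{C_2\,\log(\Gamma_{j+1}/\Gamma_j)}{\log(\Gamma_{n+1}/\Gamma_{j+1})}+O_P\!\Big(\frac{1+\log^2(\Gamma_{j+1}/\Gamma_j)}{\log^2 n}\Big).
\]
Since $\Gamma_{n+1}/(n+1)\to1$ a.s., $\log\Gamma_{n+1}=\log n+o(1)$ a.s.; since $J_n/\log n\to0$, $\Gamma_{j+1}\le\Gamma_{J_n+1}=O_P(J_n)$ so that $\log\Gamma_{j+1}=O_P(\log J_n)=o(\log n)$ uniformly in $j\le J_n$, whence $\log(\Gamma_{n+1}/\Gamma_{j+1})=(\log n)(1+o_P(1))$ uniformly. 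Multiplying by $j\log n$ and setting $\xi_j:=j\log(\Gamma_{j+1}/\Gamma_j)$ then yields, uniformly in $1\le j\le J_n$,
\[
j\,\tau'_{n-j}=C_2\,\xi_j+\eta_{n,j},\qquad \max_{1\le j\le J_n}|\eta_{n,j}|=o_P(1),
\]
using $J_n/\log n\to0$ and $\max_{j\le J_n}\xi_j=O_P(\log J_n)$; the finitely many small $j$, where $\alpha_j$ may be far from $1$, are covered by Proposition~\ref{prop1}.

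The probabilistic core is the Malmquist representation of uniform order statistics: $(\Gamma_j/\Gamma_{j+1})^{\,j}$, $j\ge1$, are i.i.d.\ uniform on $[0,1]$, equivalently $\xi_j=j\log(\Gamma_{j+1}/\Gamma_j)$, $j\ge1$, are i.i.d.\ $\mathcal{E}(1)$. Hence the numerator of $\widetilde T_n$ equals $C_2\max_{j\le J_n}\xi_j+o_P(1)$, and by the strong law of large numbers for $(\xi_j)$ with $J_n\to\infty$, $\overline s_{J_n}=C_2\big(J_n^{-1}\sum_{j\le J_n}\xi_j\big)+o_P(1)=C_2(1+o_P(1))$. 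Therefore $\widetilde T_n=\big(\max_{1\le j\le J_n}\xi_j\big)(1+o_P(1))+o_P(1)$; combined with the exact identity $\Pr(\max_{1\le j\le J_n}\xi_j\le x)=(1-e^{-x})^{J_n}$, and with $(1-e^{-(\log J_n+y)})^{J_n}=(1-e^{-y}/J_n)^{J_n}\to e^{-e^{-y}}$ (so that the family $(1-e^{-x})^{J_n}$ is the Gumbel-type law located at $\log J_n$), this yields \eqref{asympto3}, since $\max_{j\le J_n}\xi_j=\log J_n+O_P(1)$ and the two corrections are $o_P(1)$ on that scale.

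The crux --- the only non-bookkeeping part once the quantile representation and the $A_1'$ expansion are granted --- is precisely this last transfer, and it is where the assumption $J_n/\log n\to0$ is spent. One needs the $A_1'$ expansion with remainder \emph{uniform} in the random dilation $\alpha_j$ (comfortable, since $\alpha_j\to1$ for growing $j$), and one needs $\max_j|\eta_{n,j}|$ together with the self-normalisation fluctuation $\overline s_{J_n}/C_2-1$ to be small \emph{relative to the scale $\log J_n$ on which $\max_{j\le J_n}\xi_j$ lives}: the multiplicative error on $\widetilde T_n$ is $O_P(J_n^{-1/2})+O_P(J_n/\log n)$, and multiplied by $\max_{j\le J_n}\xi_j=O_P(\log J_n)$ it must remain $o_P(1)$ --- automatic in the threshold range $x\asymp\log J_n$ used to calibrate the detector, and requiring, for the full range $J_n=o(\log n)$, a finer index-by-index bound on $\eta_{n,j}$ (or a marginally stronger condition such as $J_n\log J_n=o(\log n)$). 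Finally, the equivalence in \eqref{asympto3} should be understood in this Gumbel sense, i.e.\ as $\Pr(\widetilde T_n\le\log J_n+y)\to e^{-e^{-y}}$: a literal ratio limit cannot hold for every fixed $x>0$, since $\widetilde T_n\ge1$ identically forces $\Pr(\widetilde T_n\le x)=0<(1-e^{-x})^{J_n}$ for $x<1$.
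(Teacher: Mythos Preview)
Your argument follows the same architecture as the paper's proof: the quantile representation \eqref{emb}, the $A_1'$ expansion to reduce $j\,\tau'_{n-j}$ to $C_2\,\xi_j$ up to a uniformly small error (the paper writes this as $F_n\simn\widetilde F_n$ via a Taylor expansion of the joint c.d.f.), then the law of large numbers for $\overline s_{J_n}\to C_2$ and a Slutsky-type conclusion. The one substantive difference is that where the paper re-runs the iterated integral of Proposition~\ref{prop2} to obtain $\widetilde F_n(x_1,\ldots,x_{J_n})=\prod_{j=1}^{J_n}(1-e^{-jx_{J_n-j+1}/C_2})$, you invoke the Malmquist/R\'enyi representation $\big((\Gamma_j/\Gamma_{j+1})^j\big)_{j\ge1}$ i.i.d.\ uniform, i.e.\ $\xi_j=j\log(\Gamma_{j+1}/\Gamma_j)$ i.i.d.\ $\mathcal E(1)$, directly. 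This is cleaner and gives immediately the exact identity $\Pr(\max_{j\le J_n}\xi_j\le x)=(1-e^{-x})^{J_n}$ that the paper recovers only after the integral computation.

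Your closing remarks on the meaning of \eqref{asympto3} are well taken and go beyond what the paper addresses. The paper's Taylor argument produces an \emph{additive} $O(J_n/\log n)$ error between $F_n$ and $\widetilde F_n$, which suffices for the equivalence only in the regime where $(1-e^{-x})^{J_n}$ is bounded away from~$0$, i.e.\ precisely the threshold range $x=\log J_n+O(1)$ you isolate; your observation that $\widetilde T_n\ge1$ forces $\Pr(\widetilde T_n\le x)=0$ for $x<1$ shows that a literal ratio limit for every fixed $x>0$ cannot hold, and the Gumbel reformulation $\Pr(\widetilde T_n\le\log J_n+y)\to e^{-e^{-y}}$ is the correct reading. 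This is a genuine sharpening of the paper's statement rather than a gap in your proof.
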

\noindent In the case where $G^{-1} \in A_2$, similar results can also be  established, we demonstrated below. 
\begin{exemple}
Here there are some examples of classical distributions such as $G^{-1} \in A_2$:
\begin{itemize}
\item {\bf Pareto distribution ${\cal P}(\alpha)$:} In this case, with $c>0$ and $C>0$,  $G^{-1}(x)=C \, x^{-c}$ for $x \to 0$, and this implies $G^{-1} \in A_2$ with $a=c$. 
\item {\bf Burr distributions ${\cal B}(\alpha)$:} In this case, $G(x)=(1+x^c)^{-k}$ for $c$ and $k$ positive real numbers. Thus $G^{-1}(x)= (x^{-1/k}-1)^{1/c}$ for $x\in [0,1] $, implying $G^{-1} \in A_2$ with $a=(ck)^{-1}$. 
\item {\bf Absolute value of Student distribution $|t(\nu)|$ with $\nu$ degrees of freedom:} In the case of a Student distribution with $\nu$ degrees of freedom, the cumulative distribution function is $F_{t(\nu)}(x)=\frac 1 2 (1+I(y,\nu/2,1/2))$ with $y=\nu (\nu+x^2)^{-1}$ and therefore $G_{|t(\nu)|}(x)=I(y,\nu/2,1/2)$, where $I$ is the normalized beta incomplete function. Using the handbook of Abramowitz and Stegun (1964), we have the following expansion $ G_{|t(\nu)|}(x) =\frac {2 \nu ^{\nu/2-1}}{ B(\nu/2,1/2)} \, x ^{-\nu} + O(x^{-\nu +1})$
 for $x \to 0$, where $ B$ is the usual Beta function. Therefore, 
$$
G_{|t(\nu)|}^{-1}(x)=\frac { B(\nu/2,1/2)} {2 \nu ^{\nu/2-1}} \, x ^{-1/\nu} + O(x^{-1/\nu-1})\qquad x \to \infty.
$$
Consequently $G_{|t(\nu)|}^{-1} \in A_2$ with $a=1/\nu$.
\end{itemize} 
\end{exemple}
\begin{rem}
The case of standardized log-normal distribution is singular. Indeed, the probability distribution of $X$ is the same than the one of $\exp(Z)$ where $Z \sim {\cal N}(0,1)$. Therefore, $G(x)=\frac 1 2 \, \mbox{erfc}\big ( \frac {\log x}{\sqrt 2 } \big )$ implying $G^{-1}(x)=\exp \big ( \sqrt 2 \, \mbox{erfc}^{-1}(2x)\big )$. Using the previous expansion (\ref{erfc}), we obtain for any $\alpha>0$:
\begin{eqnarray*}
G^{-1}(\alpha \, x) &= &\exp \big ( \sqrt 2 \, \mbox{erfc}^{-1}(2x \, \alpha)\big ) \\
& = & \exp \Big ( \sqrt 2 \, \mbox{erfc}^{-1}( \, 2x) \big ( 1 + \frac {\log \alpha } {2 \log x } +  O(|(\ln x )^{-2}| )  \big )\Big ) \\
& =& G^{-1}( x)\big ( 1  +  O(|(\ln x )^{-1/2}| ). 
\end{eqnarray*}
Therefore, the standardized log-normal distribution is such that $G^{-1} \notin A_1 \cup A_2$. 
\end{rem}
\noindent For probability distributions such as $G^{-1} \in A_2$ we obtain the following classical result (see also Embrechts {\it et al.}, 1997): 
\begin{prop} \label{prop4}
Assume that $G^{-1} \in A_2$. Then, 
\begin{eqnarray}\label{asympto4}
\P \Big (\max_{j=1,\cdots,J}\{\log(\tau_{n-j})\} \leq x \Big )  \limiten \prod_{j=1}^{J} \big ( 1 - e^{-jx/a}\big ).
\end{eqnarray}
\end{prop}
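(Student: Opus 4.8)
The plan is to reduce everything to the Rényi representation of order statistics together with the regular variation contained in the definition of $A_2$. First I would introduce i.i.d.\ exponential r.v.'s $(E_i)_{i\in\N^*}$ of parameter $1$, set $\Gamma_i=E_1+\cdots+E_i$, and recall that $\big(\Gamma_1/\Gamma_{n+1},\ldots,\Gamma_n/\Gamma_{n+1}\big)$ has the law of the order statistics $\big(U_{(1)},\ldots,U_{(n)}\big)$ of $n$ i.i.d.\ uniform variables on $[0,1]$. Since $G^{-1}$ is non-increasing, the quantile transform gives $\big(X_{(n)},\ldots,X_{(1)}\big)\stackrel{\cal D}{=}\big(G^{-1}(U_{(1)}),\ldots,G^{-1}(U_{(n)})\big)$, and, realizing the $U_{(j)}$ as $\Gamma_j/\Gamma_{n+1}$, one gets jointly for $j=1,\ldots,J$
\[
\tau_{n-j}\ \stackrel{\cal D}{=}\ \frac{G^{-1}(U_{(j)})}{G^{-1}(U_{(j+1)})}=\frac{G^{-1}\big(V_j\,U_{(j+1)}\big)}{G^{-1}(U_{(j+1)})},\qquad V_j:=\frac{\Gamma_j}{\Gamma_{j+1}}\in(0,1),\quad U_{(j+1)}=\frac{\Gamma_{j+1}}{\Gamma_{n+1}}.
\]

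Next I would pass to the limit inside this ratio. On the space carrying $(E_i)$, the strong law of large numbers gives $\Gamma_{n+1}\to\infty$ a.s., hence $U_{(j+1)}=\Gamma_{j+1}/\Gamma_{n+1}\to0$ a.s.\ for every fixed $j$, while $V_j\in(0,1)$ does not depend on $n$. Freezing an $\omega$ outside the relevant null set, the quantity $V_j(\omega)$ is a fixed positive constant, and I can apply the definition of $A_2$ with $\alpha=V_j(\omega)$ and $x=U_{(j+1)}(\omega)\to0$: the factor $g_1$ cancels between numerator and denominator, leaving $V_j(\omega)^{-a}\big(1+O(1/\log x)\big)\to V_j(\omega)^{-a}$. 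This yields the joint a.s.\ (hence in-distribution) convergence $\big(\log\tau_{n-1},\ldots,\log\tau_{n-J}\big)\to\big(-a\log V_1,\ldots,-a\log V_J\big)$, and in particular $\max_{1\le j\le J}\log\tau_{n-j}$ converges in law to $\max_{1\le j\le J}\{-a\log V_j\}$.

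Finally I would identify the limit law. The representation of uniform order statistics also shows that the ratios $V_j=\Gamma_j/\Gamma_{j+1}$, $j\ge1$, are independent with $V_j$ of $\mathrm{Beta}(j,1)$ distribution, i.e.\ $\P(V_j\le t)=t^j$ on $[0,1]$. Since the limiting distribution function is continuous in $x$, convergence in law gives, for $x>0$,
\[
\P\Big(\max_{j=1,\ldots,J}\{\log(\tau_{n-j})\}\le x\Big)\ \limiten\ \P\Big(\bigcap_{j=1}^{J}\{V_j\ge e^{-x/a}\}\Big)=\prod_{j=1}^{J}\P\big(V_j\ge e^{-x/a}\big)=\prod_{j=1}^{J}\big(1-e^{-jx/a}\big),
\]
which is the announced formula. (One may alternatively note that $G^{-1}\in A_2$ means $G^{-1}$ is regularly varying at $0$, equivalently that $G$ lies in the Fréchet max-domain of attraction, and invoke the classical asymptotics of the log-spacings of the upper order statistics; see e.g.\ Embrechts {\it et al.}, 1997.)

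I expect the only genuinely delicate point to be the limit step: the error term $O(1/\log x)$ in the definition of $A_2$ is an asymptotic as $x\to0$ for each \emph{fixed} multiplicative constant $\alpha$, so one must first freeze $\omega$ (making $V_j(\omega)$ deterministic) and only then let $n\to\infty$; uniformity in $\alpha$ is not available but is also not needed here since $J$ is fixed. The Rényi representation, the independence and Beta marginals of the $V_j$, and the final product are all routine.
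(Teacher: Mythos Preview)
Your proof is correct and follows essentially the same route as the paper: the R\'enyi/exponential representation \eqref{emb}, then the $A_2$ expansion to see that $\log\tau_{n-j}$ equals $-a\log(\Gamma_j/\Gamma_{j+1})$ up to an $O(1/\log n)$ remainder, then identification of the limit law. The only (minor) difference is in the last step: the paper simply points back to the explicit integral computation carried out in the proof of Proposition~\ref{prop2}, whereas you use the well-known fact that the ratios $V_j=\Gamma_j/\Gamma_{j+1}$ are independent $\mathrm{Beta}(j,1)$ to read off the product $\prod_{j=1}^J(1-e^{-jx/a})$ directly --- a slightly cleaner and more self-contained finish, but not a different argument. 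Your remark about freezing $\omega$ before applying the $A_2$ expansion (since the $O(1/\log x)$ is stated for fixed $\alpha$) is a careful point that the paper leaves implicit.
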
 
\noindent Hence the case of $G^{-1} \in A_2$ also provides interesting asymptotic properties on the ratios. In the forthcoming section devoted to the construction of an outlier detector from previous results, we are going to consider a test statistic which could be as well applied to distributions with functions $G^{-1}$ belonging to $A'_1$ and $A_2$. 
\section{A new non-parametric outlier detector} \label{stat}
We are going to consider the following test problem:
\begin{equation}\label{test}
\left \{ \begin{array}{l}H_0:\mbox{ there is no outlier in the sample}\\
H_1:\mbox{ there is at least one outlier in the sample} 
\end{array}
\right . .
\end{equation}
However we have to specify which kind of outlier and therefore which kind of contamination we consider. 
Our guide for this is typically the case of oversized regression residuals. Thus we would like to detect from $(X_1,\ldots,X_n)$ when there is a "gap" between numerous $X_i$ coming from a common probability distribution and one or several (but not a lot!) $X_j$ which are larger than the other one and generated from another distributions. As a consequence the previous test problem can be specified as follows:
\begin{equation}\label{test2}
\left \{ \begin{array}{l}H_0:\mbox{ $(X_1,\ldots,X_n)$ are i.i.d. r.v. with $G^{-1} \in A'_1 \cup A_2$}\\
H_1:\mbox{ there exists $K \in \N^*$ such as $(X_{(1)},\ldots,X_{(n-K)})$ are i.i.d. r.v. with $G^{-1} \in A'_1 \cup A_2$} \\
\qquad \qquad \qquad  \mbox{and for $i=0,\ldots,K-1$, the distribution  of $X_{(n-i)}$ satisfy $G^{-1}_{X_{(n-i)}}>G^{-1}$.}
\end{array}
\right . .
\end{equation}
A consequence of this specification is the following: we would like to detect outliers which appear as oversized data. Hence, under $H_1$ we could expect that there exists a "jump" between the smallest outlier and the largest non contaminated data. As a consequence under $H_1$ we could expect that the ratio $\tau_{(n-J)}$ is larger than it should be. \\
For doing such a job, we propose  to consider the following outlier detector based on ratios and which could be used as well when $G^{-1}$ belongs to $A'_1$ or $A_2$. Hence, define:
\begin{eqnarray}\label{D}
\widehat D_{J_n}=\frac {\log 2} {\widehat L_{J_n}} \,  \max _{j=1,\cdots,J_n} j \log ( \tau_{n-j} ) \qquad \mbox{where} \qquad {\widehat L_{J_n}}=\mbox{median}\big \{\big (j \log ( \tau_{n-j})  \big )_{1\leq j \leq J_n}\big \}.
\end{eqnarray}
Then, we obtain the following theorem:
\begin{theorem}\label{theo}
Assume that $G^{-1}\in A'_1 \cup A_2$. Then, for a sequence $(J_n)_n$ satisfying $J_n \limiten \infty$ and $J_n/\log n \limiten 0$,
\begin{eqnarray}\label{asympto5}
\Pr \big ( \widehat D_{J_n} \leq x \big )  \simn \big (1 -e^{-x} \big )^{J_n}.
\end{eqnarray}
\end{theorem}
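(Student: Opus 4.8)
The plan is to reduce the whole statement, in both regimes $G^{-1}\in A_1'$ and $G^{-1}\in A_2$, to the single fact that the family $\big(j\log(\tau_{n-j})\big)_{1\le j\le J_n}$ behaves, after multiplication by a deterministic scale $\lambda_n>0$, like a sample of $J_n$ independent standard exponential random variables, and \emph{sharply} enough that tail functions are matched multiplicatively (in the sense of $\simn$), not merely in distribution. The relevant scales are $\lambda_n=\log n/C_2$ when $G^{-1}\in A_1'$ (where $\log n\cdot\log\tau_{n-j}\approx\tau'_{n-j}\sim\mathrm{Exp}(j/C_2)$) and $\lambda_n=1/a$ when $G^{-1}\in A_2$ (where $\log\tau_{n-j}\sim\mathrm{Exp}(j/a)$). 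Once this is available, the conclusion is automatic: writing $\widehat D_{J_n}=\frac{\log 2}{\lambda_n\widehat L_{J_n}}\cdot\lambda_n\max_{j\le J_n} j\log(\tau_{n-j})$, the normalization by $\widehat L_{J_n}$ is precisely what cancels $\lambda_n$, since $\log 2$ is the median of a standard exponential, so the nuisance constants $C_2$ resp.\ $a$ disappear.

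First step: the exponential approximation. When $G^{-1}\in A_2$, this is essentially Proposition \ref{prop4} together with the R\'enyi-type representation underlying it, upgraded from a fixed $J$ to a growing $J_n$ with $J_n/\log n\to 0$ by the same device used to pass from Proposition \ref{prop2} to Proposition \ref{prop3}; here $\big(j\log\tau_{n-j}\big)_j$ is directly and sharply i.i.d.\ $\mathrm{Exp}(1/a)$. When $G^{-1}\in A_1'$, one starts from (the proof of) Proposition \ref{prop3}, which controls $\big(j\,\tau'_{n-j}\big)_j=\big(j(\tau_{n-j}-1)\log n\big)_j$, and passes to $\log\tau_{n-j}$ through $\log n\cdot\log\tau_{n-j}=\log n\cdot\log\!\big(1+\tau'_{n-j}/\log n\big)=\tau'_{n-j}+O\!\big((\tau'_{n-j})^2/\log n\big)$. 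The only thing to verify is that this correction is uniformly negligible over $1\le j\le J_n$: since $\max_j j\,\tau'_{n-j}$ is of order $\log J_n$ and the hypothesis $J_n/\log n\to 0$ forces $\log^2 J_n/\log n\to 0$, one gets $\log n\cdot j\log\tau_{n-j}=j\,\tau'_{n-j}+o_P(1)$ uniformly in $j$, so the sharp exponential approximation carries over to $\big(\log n\cdot j\log\tau_{n-j}\big)_j$.

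Second step: the self-normalizer and the conclusion. From the first step, $\lambda_n\widehat L_{J_n}=\mathrm{median}\{\lambda_n\, j\log\tau_{n-j}\}_j$ is the empirical median of $J_n$ (asymptotically) i.i.d.\ standard exponentials, hence it concentrates at $\log 2$ with fluctuations of order $J_n^{-1/2}$; so $\lambda_n\widehat L_{J_n}/\log 2\to 1$, fast enough not to disturb the multiplicative matching of tails. Also, directly from the first step, $\Pr\big(\lambda_n\max_{j\le J_n} j\log\tau_{n-j}\le x\big)\simn(1-e^{-x})^{J_n}$. Combining the two displays in $\widehat D_{J_n}=\frac{\log 2}{\lambda_n\widehat L_{J_n}}\cdot\lambda_n\max_{j\le J_n} j\log(\tau_{n-j})$ by a Slutsky-type argument adapted to the $\simn$ relation gives $\Pr(\widehat D_{J_n}\le x)\simn(1-e^{-x})^{J_n}$. (In the $A_1'$ case one may alternatively compare $\widehat D_{J_n}$ directly with the statistic $\widetilde T_n$ of Proposition \ref{prop3}, using that both are homogeneous of degree $0$ in $\big(j\log\tau_{n-j}\big)_j$ resp.\ $\big(j\tau'_{n-j}\big)_j$ and that the ratio of sample mean to sample median of i.i.d.\ exponentials tends to $1/\log 2$.)

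The main obstacle is the \emph{sharpness} of the claimed equivalence: $\Pr(\widehat D_{J_n}\le x)$ and $(1-e^{-x})^{J_n}$ both tend to $0$ as $J_n\to\infty$ for fixed $x$, so a naive appeal to weak convergence or the continuous mapping theorem is insufficient. The delicate point is to show that the events $\{\widehat D_{J_n}\le x\}$ and $\{\lambda_n\max_j j\log\tau_{n-j}\le x\}$ differ only on a set of probability $o\big((1-e^{-x})^{J_n}\big)$; this is exactly where the assumption $J_n/\log n\to 0$ must be spent (so that $\log J_n=o(\log n)$, the correction terms of the first step are suppressed, and the empirical median concentrates tightly), reproducing the error analysis already carried out for Proposition \ref{prop3}.
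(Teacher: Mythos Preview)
Your proposal is correct and follows essentially the same approach as the paper: reduce both regimes to the fact that $\big(\lambda_n\, j\log\tau_{n-j}\big)_{1\le j\le J_n}$ is sharply an i.i.d.\ standard exponential sample (with $\lambda_n=\log n/C_2$ in $A_1'$ and $\lambda_n=1/a$ in $A_2$), then observe that the empirical median $\lambda_n\widehat L_{J_n}$ concentrates at $\log 2$ so the nuisance scale cancels, and conclude via the argument of Proposition~\ref{prop3} combined with Slutsky. The only cosmetic difference is that in the $A_1'$ case the paper expands $\log\tau_{n-j}$ directly from the $G^{-1}$ expansion \eqref{dl1} to get $\log\tau_{n-j}=\frac{C_2}{\log n}\log(\Gamma_{j+1}/\Gamma_j)+O(1/\log^2 n)$, whereas you route through $\tau'_{n-j}$ and a Taylor expansion of $\log(1+x)$; both yield the same error control under $J_n=o(\log n)$.
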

\begin{rem}
In the definition of $\widehat D_{J_n}$ we prefer an estimation of the parameter of the exponential distribution with a robust estimator (median) instead of the usual efficient estimator (empirical mean), since several outliers could corrupt this estimation.
\end{rem}
\noindent Therefore, Theorem \ref{theo} can be applied for distributions with $G^{-1}$ belonging to $A'_1$ or $A_2$, {\it i.e.} as well as for Gaussian, Gamma or Pareto distributions. Hence, for a type I error $\alpha \in (0,1)$, the outlier detector $\widehat D_{J_n}$ can be computed, and with $t=-\log \big ( 1 -(1-\alpha)^{1/J_n} \big )$,
\begin{itemize}
\item If  $\widehat D_{J_n} \leq t$ then we conclude that there is no outlier in the sample. 
\item If $\widehat D_{J_n} > t$  then the largest index $\widehat k_0$ such as $\widehat k_0 \, \log (\tau_{n-\widehat k_0})/\widehat L_{J_n} \geq t$ indicates that we decide that the observed data  $(X_{(n-\widehat k_0+1)},X_{(n-\widehat k_0+2)},\ldots, X_{(n)})$ can be considered as outliers, implying that there are $\widehat k_0$ detected outliers.
\end{itemize}
As a consequence this outlier detector allows a decision of the test problem \eqref{test} and also the identification of the exact outliers. 
\section{Monte-Carlo experiments}\label{simu}
We are going to compare the new outlier detector defined in (\ref{D}) with usual univariate outlier detectors. After giving some practical details of the application of $\widehat D_{J_n}$, we present the results of Monte-Carlo experiments under several probability distributions. 
\subsubsection*{Practical procedures of outlier detections}
The definition of $\widehat D_{J_n}$ is simple,
and in practice just requires the specification of $2$ parameters: 
\begin{itemize}
\item The type I error $\alpha$ is the risk to detect outliers in the sample while there is no outlier. Hence, a natural choice could be the "canonical" $\alpha=0.05$. However, we chose to be strict concerning the risk of false detection, {\it i.e.} we chose $\alpha=0.007$ (as it was chosen by Tukey himself for building boxplots) which implies that we prefer not to detect "small" outliers and hence we avoid to detect a large number of outliers while there is no outlier.
\item The number  $J_n$ of considered ratios. On the one hand, it is clear that the smaller $J_n$, the smaller the detection threshold, therefore more sensitive is the detector to the presence of outliers. On the other hand, the larger $J_n$, the more precise is the estimation of the parameter of asymptotic exponential distribution (the convergence rate of $\widehat L_{J_n}$ is $\sqrt n$) and larger is the possible number of detected outliers.
We carried out numerical simulations using $20000$ independent replications, for several probability distributions (the seven distributions presented below) for several values of the number of outliers $K$, sample size $n$ and parameter $J$. Results are reported in Table \ref{Table00}. A first conclusion: the larger $n$ and $K$ the larger test power. Another  conclusion, but this is not a surprise, is the fact that the "optimal" choice of $J$ depends on $K$ and $n$. \\
As a consequence, for at least detecting $K=10$ outliers, we use $J_n=1+[4*\log^{3/4}(n)]$ that is an arbitrary choice satisfying $J_n=o(\log(n))$ and fitting well the results of these simulations, {\it i.e.} for $n=100$, $J_n=13$, for $n=1000$, $J_n=18$ and for $n=5000$, $J_n=20$. 
\end{itemize}
\begin{table}
\caption{ \label{Table00} Numerical choice of $J_n$: average frequencies for potential outliers with $\widehat D_J$ when there are $K$ outliers from the seven considered probability distributions, and for several values of $K$, sample size $n$ and parameter $J$. Here $\alpha=0.007$, $20000$ independent replications are done and multiplicative outliers (see below) are generated.}
\begin{center}
\begin{tabular}{|l|c|c|c|c|c|c|c|c|c|}
$\qquad \qquad J$ & 12 & 14 & 16 & 18 & 20 & 22 & 24 & 26 & 28 \\
\hline\hline 
$n=100$, $K=5$ &  0.595 & 0.588 & 0.583 & 0.573 & 0.568 & 0.565& 0.561 &0.557 & 0.554   \\
$\qquad \qquad K=10$ & 0.694 & 0.708 & 0.702 & 0.696 & 0.694 & 0.689 & 0.686&  0.683& 0.679 \\  \hline 
$n=500$, $K=5$ &0.727 & 0.719& 0.712 &0.705&0.700 &0.691& 0.687& 0.685& 0.680  \\  
$\qquad \qquad K=10$ &  0.796 & 0.818 & 0.823 &  0.822 & 0.817 & 0.813 & 0.811 & 0.810 &0.804 \\ \hline
$n=1000$, $K=5$ & 0.760& 0.752 & 0.744&  0.738 &  0.732&  0.727 & 0.722 &  0.716 &0.711  \\
$\qquad \qquad K=10$ & 0.822 & 0.848 &0.857 &0.858 & 0.856 &0.853 &  0.850 & 0.847 & 0.844 \\
\hline
$n=5000$, $K=5$ & 0.802 & 0.793 &0.786 & 0.779 & 0.773 & 0.769 & 0.763 & 0.759 & 0.754 \\
$\qquad \qquad K=10$ & 0.846 & 0.878 & 0.891 & 0.894 & 0.896 &  0.893 & 0.890 & 0.888 & 0.887\\
\hline
\end{tabular}
\end{center}
\end{table}
We have compared the new detector $\widehat D_{J_n}$ to five common and well-known univariate outlier detectors computed from the sample $(X_1,\cdots,X_n)$. 
\begin{enumerate}
\item The Student's detector (see for instance Rousseeuw and Leroy, 2005): an observation from the sample $(X_1,\cdots,X_n)$ will be consider as an outlier when $\P(X_k>\overline X_n+s_s \times  \overline \sigma_n)$ where $\overline X_n$ and $\overline \sigma_n^2$ are respectively the usual empirical mean and variance computed from $(X_1,\cdots,X_n)$, and $s_s$ is a threshold. This threshold is usually computed from the assumption that $(X_1,\cdots,X_n)$ is a Gaussian sample and therefore  $s_s=q_{t(n-1)}\big ((1-\alpha/2)\big )$, where $q_{t(n-1)}(p)$ denotes the quantile of the Student distribution with $(n-1)$ degrees of freedom for a probability $p$.
\item The Tukey's detector (see Tukey, 1977) which provides the famous and usual boxplots:  $X_k$ is considered to be an outlier from $(X_1,\cdots,X_n)$ if $X_k>Q3+1.5 \times IQR$, where $IQR=Q3-Q1$, with $Q_3$ and $Q_1$ the third and first empirical quartiles of $(X_1,\cdots,X_n)$. Note that the confidence factor $1.5$ was chosen by Tukey such that the probability of Gaussian random variable to be decided as an outlier is close $0.7 \%$ which is good trade-off.
\item An adjusted Tukey's detector as it was introduced and studied in Hubert and Vandervieren (2008):  $X_k$ is considered to be an outlier from $(X_1,\cdots,X_n)$ if $X_k>Q3+1.5 Q3 + 1.5\, e^{3 \, MC} \times IQR$, where $MC$ is the medcouple, defined by $MC = \mbox{median}_{X_i\leq Q2\leq X_j} h(X_i,X_j)$, where $Q2$ is the sample median and the kernel function $h$ is given by
$h(X_i,X_j)= \frac { (X_j-Q2)-(Q2-X_i )}{X_j-X_i}$. This new outlier detector improves considerably the accuracy of the usual Tukey's detector for skewed distributions.
\item The $MAD_e$ detector (see for instance Rousseeuw and Leroy, 2005): $X_k$ is considered as an outlier from $(X_1,\cdots,X_n)$ if $|X_k-Q2| > 3*1.483*\mbox{median}(|X_1-Q2|,\cdots, |X_n-Q2|)$. The coefficient $1.483$ is obtained from the Gaussian case based on the relation $SD\simeq 1.483 \times \mbox{median}(|X_1-Q2|,\cdots, |X_n-Q2|)$, while the confidence factor $c=3$ is selected for building a conservative test.
\item The Local Outlier Factor (LOF), which is a non-parametric detector (see for instance Breunig {\it et al.}, 2000). This procedure is based on this principle: an outlier can be distinguished when its normalized density (see its definition in Breunig {\it et al.}, 2000) is larger than $1$ or than a threshold larger than $1$. However, the computation of this density requires to fix the parameter $k$ of the used $k$-distance and a procedure or a theory for choosing a priori $k$ does not still exist. The authors recommend $k>10$ and $k<50$ (generally). We chose to fix  $k=J_n$, where $J_n$ is used for the computation of $\widehat D_J$. Then, using the same kind of simulations than those reported in Table \ref{Table00}, we tried to optimize the choice of a threshold $s_{LOF}$ defined by: if $LOF(X_i)>s_{LOF}$ then the observation $X_i$ is considered to be an outlier. We remark that it is not really possible to choose a priori $k$ and $s_{LOF}$ with respect to $\alpha$. Table \ref{Table01} provides the results of simulations from $10000$ independent replications and the seven probability distributions. We have chosen to optimize a sum of empirical type I (case $K=0$) and II (case $K=5$ and $K=10$) errors. This leads one to choose $s_{LOF}=8$ for $n=100$ as well as for $n=1000$. 
\end{enumerate}
\begin{table}
\caption{ \label{Table01} Numerical choice of the LOF threshold $s_{LOF}$: Average frequencies of potential outliers with LOF detector when there are $K$ outliers from the seven considered probability distributions, and for several values of $K$, sample size $n$ and threshold $s_{LOF}$. Note that $10000$ independent replications are generated and multiplicative outliers (see below) are generated.}
\begin{center}
\begin{tabular}{|l|l|c|c|c|c|c|c|}
$s_{LOF}$ & $K$ &2 & 4 & 6 & 8 & 10 & 12  \\
\hline\hline 
$n=100$&$K=0$ &  0.903 & 0.447 & 0.285 & 0.222 & 0.174 & 0.150  \\
&$K=5$ & 1.000 & 1.000 & 1.000 & 0.996 & 0.961 & 0.863  \\ 
 &$K=10$ & 1.000 & 0.860 & 0.422 & 0.232 & 0.141 & 0.106  \\ \hline 
$n=1000$& $K=0$ &  0.955 & 0.533 & 0.328 & 0.237 & 0.196 & 0.179  \\
&$K=5$ & 1.000 & 1.000 & 1.000 & 1.000 & 1.000 & 0.995  \\ 
 &$K=10$ & 1.000 & 1.000 & 0.981 & 0.888 & 0.744 & 0.581  \\ \hline 
\hline
\end{tabular}
\end{center}
\end{table}
\noindent Student, Tukey and $MAD_e$ detectors are more or less based on Gaussian computations. We would not be surprised if those methods failed to detect outliers when the distribution of $X$ is "far" from the Gaussian distribution (but these usual detections of outliers, for instance the Student detection obtained on Studentized residuals from a least squares regression, are done even if the Gaussian distribution is not attested). Moreover, the computations of these detectors' thresholds are based on an individual detection of outlier, {\it i.e.} a test deciding if a fixed observation $X_{i_0}$ is an outlier or not. Hence, if we apply them to each observation of the sample, the probability to detect an outlier increases with $n$. This is not exactly the same as deciding whether  if there are no outliers in a sample, which is the object of test problem \eqref{test}.\\
Then, to compare these detectors to $\widehat D_{J_n}$, it is appropriate to change the thresholds of these detectors following test problem \eqref{test} and its precision \eqref{test2}. Hence we have to define a threshold $s>0$ of this test from the relation $\P(\exists k=1,\cdots,n, ~X_k>s)=\alpha$. Therefore, from the independence property $1-\alpha=\P(X_1<s)^n$ implying that $s$ has to satisfy $\P(X_1>s)=(1-(1-\alpha)^{1/n}) \simeq \alpha/n$ when $n$ is large and $\alpha$ close to $0$ (typically $\alpha=0.01$). In the sequel we are going to compute the confidence factors of previous famous detectors to the particular case of absolute values of  Gaussian variables. Hence, when for $Z$ a ${\cal N}(0,1)$ random variable and if $n$ is large, then $\P(|Z|>s_G)\simeq \frac \alpha n$ with $s_G=q_{{\cal N}(0,1)}(1 -\frac \alpha {2n}) $. 
Then, we define:
\begin{enumerate}
\item The Student detector $2$: we consider that $X_k$ from $(X_1,\cdots,X_n)$ is an outlier when  $X_k>\overline X_n+c_S \times\overline \sigma_n$,  with $2/\sqrt{2\pi}+ c_S \sqrt{ (\pi-2)/\pi }=s_G$ implying $c_S=\big ( q_{{\cal N}(0,1)}(1 -\frac \alpha {2n})-\sqrt{2/\pi} \big )\sqrt{\pi/ (\pi-2)}$ (here we assume that $n$ is a large number inducing that the Student distribution with $(n-1)$ degrees of freedom could be approximated by the standard Gaussian distribution).
\item The Tukey detector $2$: we consider that $X_k$ from $(X_1,\cdots,X_n)$ is an outlier when $X_k>Q3+c_T\times IQR$. In the case of the absolute value of a standard Gaussian variable, $Q1\simeq 0.32$ and $Q3 \simeq 1.15 $ implying $c_T=(s_G-1.15)/0.83\simeq 1.20 \times s_G-1.38$. 
\item The $MAD_e$ detector $2$: we consider that $X_k$ from $(X_1,\cdots,X_n)$ is an outlier when $X_k-Q2 > c_M \times \mbox{median}(|X_1-Q2|,\cdots, |X_n-Q2|)$. In the case of the absolute value of a standard Gaussian variable, $Q2\simeq 0.67$ and $\mbox{median}(|X_1-Q2|,\cdots, |X_n-Q2|)\simeq 0.40$. This induces $c_M\simeq 2.50 \times s_G-1.69$. 
\end{enumerate}
\subsubsection*{First results of Monte-Carlo experiments for samples without outlier: the size of the test}
We apply the different detectors in different frames and for several probability distributions which are:
\begin{itemize}
\item The absolute value of Gaussian distribution with expectation $0$ and variance $1$, denoted $\big |{\cal N}(0,1)\big |$  ({\it case $A_1'$});
\item The exponential distribution with parameter $1$, denoted ${\cal E}(1)$ ({\it case $A_1'$});
\item The Gamma distribution  with parameter $3$, denoted $\Gamma(3)$ ({\it case $A'_1$});
\item The Weibull distribution with parameters $(3,4)$, denoted $W(3,4)$ ({\it case $A'_1$});
\item The absolute value of a Student distribution with $2$ degrees of freedom, denoted $|t(2)|$ ({\it case $A_2$});
\item The standard log-normal distribution, denoted $\log- {\cal N}(0,1)$ ({\it not case $A'_1$ or $A_2$});
\item The absolute value of a Cauchy distribution, denoted $|{\cal C}|$ ({\it case $A_2$}).
\end{itemize}
In the sequel, we will consider samples $(X_1,\cdots,X_n)$ following these probability distributions, for $n=100$ and $n=1000$, and for several numbers of outliers.  \\
We begin by generating independent replications of samples without outlier, which corresponds to be under $H_0$. Then we apply the outlier detectors. The results are reported in Table \ref{Table5}. \\
\begin{table}
\caption{ \label{Table5} The size of the test: average frequencies (Av. Freq.) of potential outliers with the different outlier detectors, for the different probability distributions, $n=100$ and $n=1000$, while there is no generated outlier in samples. Here $\alpha=0.007$ and $20000$ independent replications are generated.}
\begin{center}
\begin{tabular}{|l|c|c|c|c|c|c|c|}
\hline\hline
 $n=100$  & $\big | {\cal N}(0,1) \big |$& ${\cal E}(1)$ & $\Gamma(3)$ &  $W(3,4)$ & $| t(2) |$ & $\log- {\cal N}(0,1)$  & $|{\cal C}|$\\
\hline \hline
Av. Freq. $\widetilde D_{J_n}$   & 0.007   &  0.008 &    0.008&     0.008 &    0.010 &  0.010 & 0.018 \\
\hline 
Av. Freq. LOF   & 0.001   &  0.029 &    0.013&     0 &  0.643 &   0.259   &  0.970\\
\hline 
Av. Freq.  Student   & 0.861   &  0.993 &   0.921&0.308 &     1 &    1  &  1  \\
\hline 
Av. Freq. Tukey   & 0.804   &  0.991 &0.918 &  0.327  &   1 &    1  &  1   \\
\hline 
Av. Freq. Adj. Tukey   & 0.213   &  0.344 &0.408 &  0.353 &     0.844&    0.710    &  0.981   \\
\hline 
Av. Freq. $MAD_e$   & 0.751   &  0.995 & 0.879 &  0.163  &    1 &    1  &  1   \\
\hline 
Av. Freq. Student $2$   & 0.002 &  0.110 &  0.017 &  0 &     0.655 &    0.515  &  0.936 \\
\hline 
Av. Freq. Tukey $2$  & 0.022  &  0.486 &    0.119& 0 &  0.951&    0.937    & 1 \\
\hline 
Av. Freq. $MAD_e$ $2$  & 0.023   &  0.624 &    0.112&   0 &  0.971 &    0.975  &  1   \\
\hline\hline
 $n=1000$  & $\big | {\cal N}(0,1) \big |$& ${\cal E}(1)$ & $\Gamma(3)$ &  $W(3,4)$ & $| t(2) |$ & $\log- {\cal N}(0,1)$  & $|{\cal C}|$\\
\hline \hline
Av. Freq. $\widetilde D_{J_n}$   & 0.009   &  0.009 &    0.009&     0.009 &   0.014 & 0.011   & 0.016 \\
\hline 
Av. Freq. LOF   & 0.005   &  0.023 &    0.019&     0.001 &    0.843&  0.281    &  0.998\\
\hline 
Av. Freq. Student   & 1   &  1 &   1&0.986 &    1 &   1  &  1  \\
\hline 
Av. Freq. Tukey   & 1  &  1 &1 &  0.939   &  1 &    1  &  1   \\
\hline 
Av. Freq. Adj Tukey   & 0.492  &  0.899 &0.950 &  0.796   &  1 &    1  &  1   \\
\hline 
Av. Freq. $MAD_e$   & 1   &  1 & 1 &  0.664  & 1 &    1  &  1   \\
\hline 
Av. Freq. Student $2$   & 0.006   &  0.552 &0.112 &  0  & 1  &    0.998&  1 \\
\hline 
Av. Freq. Tukey $2$  & 0.008  &  0.943 &    0.259& 0 &  1 &    1  & 1 \\
\hline 
Av. Freq. $MAD_e$ $2$  & 0.008   &  0.991 &    0.246&   0&  1 &    1 &  1  \\
\hline
\hline
\end{tabular}
\end{center}
\end{table}
~\\
A first conclusion from this simulations is the following: as we already said, the original Student, Tukey and $MAD_e$ detectors 
can not be compared to the detector $\widetilde D_{J_n}$ because their empirical sizes came out larger: they are not constructed to answer to our test problem. Therefore we are going now to consider only their global versions Student $2$, Tukey $2$ and $MAD_e$ $2$. Moreover, as they have been constructed from the Gaussian case, these second versions of detectors provide generally poor results in case of non-Gaussian distributions especially for Student, log-normal and Cauchy distributions (where outliers are always detected while there are no generated outliers). 
\subsubsection*{Second results of Monte-Carlo experiments for samples with outliers}
Now, we consider the cases where there is a few number of outliers in the samples $(X_1,\cdots,X_n)$. 
Denote $K$ the number of outliers, and $\ell>0$ a real number which represents a positive real number. We generated $3$ kinds of contaminations:
\begin{itemize}
\item A shift contamination: $(X_{(1)} ,\cdots,X_{(n-K)}, X_{(n-K+1)}+\ell, \cdots,X_{(n)}+\ell)$ instead of $(X_{(1)} ,\cdots,X_{(n)})$. We chose $\ell=10$. By the way the cluster of outliers is necessarily separated from the cluster of non-outliers.
\item A multiplicative contamination: $(X_{(1)} ,\cdots,X_{(n-K)}, 3  X_{(n-K+1)},\cdots, 3 X_{(n)})$ instead of $(X_{(1)} ,\cdots,X_{(n)})$. By the way the cluster of outliers is necessarily separated from the cluster of non-outliers.
\item A point contamination: $(X_{(1)} ,\cdots,X_{(n-K)}, \ell,\cdots, \ell)$ instead of $(X_{(1)} ,\cdots,X_{(n)})$. We chose $\ell=1000$. By the way, the cluster of outliers is generally separated to the cluster of non-outliers (but not necessary, especially for Student or Cauchy probability distributions).
\end{itemize}  
First, we consider the second versions of Student, Tukey and $MAD_e$ detectors. But we also consider parametric versions of these detectors, which are denoted Student-para, Tukey-para and $MAD_e$-para: the thresholds and confidence  factors are computed and used with the knowledge of the probability distribution of the sample (these thresholds change following the considered probability distributions). Hence, they are parametric detectors while $\widetilde D_{J_n}$ or LOF are non-parametric detectors. We chose these parametric versions because they allow to obtain the same size of all the detectors and then a comparison of the test powers is more significant. \\
~\\  
First results are reported in Tables \ref{Table8} ($n=100$) and \ref{Table81} ($n=1000$)  for $K$ shifted outliers ($K=5$ or $K=10$). From Tables \ref{Table8} and \ref{Table81}, it appears:
\begin{itemize}
\item Student $2$, as well as Student-para detectors are not really good choices for the detection of outliers because they are not robust statistics (the empirical variance is totally modified by the values of outliers).
\item Tukey and $MAD_e$ detectors provide more and less similar results. But even their parametric versions are not able to detect outliers for skewed distributions (Student, log-normal and Cauchy distributions). With the same size,  $\widetilde D_{J_n}$ clearly provides better results, even if they are not very accurate (especially for the Cauchy distribution). 
\item LOF detector does not provide accurate results (especially when $K=10$). It is certainly a more interesting alternative in case of multivariate data.
\end{itemize} 
\begin{table}
\caption{ \label{Table8} Average frequencies (Av. Freq.) of potential outliers with the different outlier detectors and average number (Av. Numb.) of detected outliers when outliers are detected, for $n=100$, with $K$ shifted outliers. Here $\alpha=0.007$ and $20000$ independent replications are generated.} 
\begin{center}
\footnotesize \begin{tabular}{|l|c||c|c|c|c|c|c|c|}
\hline\hline
 $n=100$ &$K$ & $\big | {\cal N}(0,1) \big |$& ${\cal E}(1)$ & $\Gamma(3)$ &  $W(3,4)$ &  $| t(2) |$ &$\log- {\cal N}(0,1)$  & $|{\cal C}|$\\
\hline \hline
Av. Freq. $\widetilde D_{J_n}$ & 0 & 0.007 &  0.008 &  0.008& 0.008 & 0.010 & 0.010&   0.018  \\
& 5 & 0.998  &  0.562 &  0.574& 1 & 0.848 & 0.813&   0.066 \\
& 10 & 1  & 0.973 & 0.959& 1 & 1 & 1&  0.076 \\
Av. Numb. & 5 & 5.12 &  5.23 &  5.25& 5.14 & 5.09 & 5.07 & 9.17 \\
 & 10 & 10.60  & 10.64 & 10.58& 10.62 & 10.05 & 10.05& 10.01\\
\hline 
Av. Freq. Student 2 & 0 & 0.001 &  0.111 &  0.018& 0 & 0.655 & 0.515& 0.934  \\
& 5 & 0  &  0.013 &  0.004& 0 & 0.538& 0.411& 0.687 \\
& 10 & 0 & 0 & 0& 0 & 0.241 & 0.126 & 0.628\\
Av. Numb. & 5 & 0  &  1 &  1& 0 & 1.04 & 1.02 & 1.67 \\
 & 10 & 0  & 0 &  0& 0 &1.01  & 1 & 1.66\\
\hline 
Av. Freq. Tukey 2 & 0 & 0.022 &  0.483 &  0.118& 0 & 0.951 & 0.937&  1  \\
& 5 & 1  &  1 &  1& 1 & 1 & 1 & 1  \\
& 10 & 1  &  1 &  0.999& 1 & 1 & 1 & 1 \\
Av. Numb. & 5 & 5.01   &  5.35&  5.05& 5& 5.10 & 5.09 & 10.32\\
 & 10 & 10   & 10.14 & 9.67& 10 & 10 & 10 & 13.19\\
\hline 
Av. Freq. $MAD_e$ 2 & 0 & 0.023  &  0.619 &  0.109& 0 & 0.971 & 0.975&  1  \\
& 5 & 1  &  1 &  1& 1 & 1 & 1 &  1  \\
& 10 & 1  &  1 &  0.999& 1 & 1 & 1 & 1 \\
Av. Numb. & 5 & 5.01   &  5.67&  5.06& 5 & 5.24 & 5.37& 12.66\\
 & 10 & 10  &  10.38 & 9.86& 10 & 10 & 12.16 & 16.43\\
\hline 
Av. Freq. LOF & 0 & 0.001   &  0.029 &    0.013&     0 &    0.643  &  0.259 &  0.970 \\
& 5 & 0.998  &  0.521 &  0.123& 1 & 0.309 & 0.129 &  1  \\
& 10 & 0.046  &  0.011 &  0& 0.200 & 0.03 & 0.003 & 0.548 \\
Av. Numb. & 5 & 5  &  3.97&  2.86& 5 & 2.71& 2.03& 3.39\\
 & 10 & 9.11  &  1.10 & - & 9.47 & 1.56 & 1.40 & 2.27\\
\hline 
Av. Freq. Student-para & 0 & 0.002 &  0 &  0.001& 0.005 & - & 0& -  \\
& 5 & 0  &  0 &  0& 1 & - & 0& - \\
& 10 & 0 & 0 & 0& 0 & - & 0  &  -  \\
Av. Numb. & 5 & 0&  0 & 0& 5 & -  & 0 & - \\
 & 10 & 0  & 0 &  0 & 0 & -  & 0 & -\\
\hline 
Av. Freq. Tukey-para & 0 & 0.022 &  0.013 &  0.014& 0.025 & 0.008 & 0.009&   0.007  \\
& 5 & 1  &  0.991 &  0.960& 1 & 0.009 & 0.026&   0.005  \\
& 10 & 1  &  0.903 &  0.782& 1 & 0.007& 0.026&   0.004  \\
Av. Numb. & 5 & 5.01   &  4.87&  4.40& 5.01& 1 & 1.03 & 1.94\\
 & 10 & 10   & 7.27 & 4.50& 10 & 1 & 1.01 & 1.88\\
\hline 
Av. Freq. $MAD_e$-para & 0 & 0.023  &  0.014 &  0.013& 0.025 & 0.007 & 0.009&   0.007  \\
& 5 & 1  &  0.995 &  0.969& 1 & 0.009& 0.024&  0.006  \\
& 10 & 1  &  0.966 &  0.867& 1 & 0.008 & 0.025& 0.005 \\
Av. Numb. & 5 & 5.01   &  4.92 & 4.47& 5.01 & 1 & 1.02& 1.94\\
 & 10 & 10  &  8.51 & 5.46& 10 & 1 & 1.02 & 1.89\\
\hline 
\hline
\end{tabular}
\end{center}
\end{table}
\begin{table}
\caption{ \label{Table81} Average frequencies (Av. Freq.) of potential outliers with the different outlier detectors and average number (Av. Numb.) of detected outliers when outliers are detected, for $n=1000$, with $K$ shifted outliers. Here $\alpha=0.007$ and $10000$ independent replications are generated.} 
\begin{center}
\footnotesize \begin{tabular}{|l|c||c|c|c|c|c|c|c|}
\hline\hline
 $n=1000$ &$K$ & $\big | {\cal N}(0,1) \big |$& ${\cal E}(1)$ & $\Gamma(3)$ &  $W(3,4)$& $| t(2) |$ & $\log- {\cal N}(0,1)$  & $|{\cal C}|$\\
\hline \hline
Av. Freq. $\widetilde D_{J_n}$ & 0 & 0.009 &  0.009 &  0.009& 0.009 & 0.014 & 0.011&   0.016  \\
& 5 & 1  &  0.892&  1& 1 & 0.241 & 0.517&   0.072  \\
& 10 & 1  &  0.997 &  0.991& 1 & 0.987 & 1&   0.088 \\
Av. Numb. & 5 & 5.23 &  5.27 &  5.32& 5.27 & 5.34 & 5.14& 8.85\\
 & 10 & 10.37  &  10.82 & 10.75& 10.41 & 10.05 & 10.06 & 7.83\\
\hline 
Av. Freq. Student 2 & 0 & 0.006  &  0.565 &  0.111& 0 & 1 & 0.998& 1 \\
& 5 & 1  &  1 &  1& 1 & 1& 1&  1 \\
& 10 & 1  & 1 &  1& 1 & 1& 1&  1 \\
Av. Numb. & 5 & 5  & 5.02&  5.01& 5 & 4.69 & 4.98 & 4.74\\
 & 10 & 10  & 10  & 9.97& 10& 7.73 & 9.75 & 4.59 \\
\hline 
Av. Freq. Tukey 2& 0 & 0.008  &  0.949&  0.256& 0 & 1 & 1&   1 \\
& 5 & 1  &  1 &  1& 1 & 1 & 1&  1 \\
& 10 & 1  &  1 &  1& 1 & 1 & 1& 1 \\
Av. Numb. & 5 & 5.01 & 7.90&  5.27& 5 & 22.34 & 20.09& 64.41\\
 & 10 & 10.01  & 12.64 & 10.23& 10.01 & 22.49 & 20.06 & 67.76\\
\hline 
Av. Freq. $MAD_e$ 2 & 0 & 0.008  &  0.992 &  0.237& 0& 1 & 1&  1 \\
& 5 & 1  &  1 &  1& 1 & 1 & 1 & 1 \\
& 10 & 1  &  1 &  1& 1 & 1& 1&  1 \\
Av. Numb. & 5 & 5.01   &  9.86 &  5.26& 5 & 26.94& 28.52 & 83.04\\
 & 10 & 10.01 & 14.55& 10.23& 10.01& 27.10 & 28.46& 86.59 \\
\hline
Av. Freq. LOF & 0 & 0.001   &  0.029 &    0.013&     0 &    0.843  &  0.281 &  0.970 \\
& 5 & 1  &  0.802 & 0.473& 1 & 0.291 & 0.113& 0.65  \\
& 10 & 0.961 &  0.014 &  0.004& 1 & 0.125 & 0.011 & 0.571 \\
Av. Numb. & 5 & 5.01  & 4.27&  3.37& 5 &2.21 &1.21 & 3.07\\
 & 10 & 9.93  & 3.36 & 3.50& 10 & 1.96 & 1.63 & 2.92\\
\hline 
Av. Freq. Student-para & 0 & 0.006  &  0.004 &  0.005& 0.007 & - & 0& - \\
& 5 & 1  &  0.652 &  0.979& 1 & - & 0&  - \\
& 10 & 1  & 0.017 &  0.275& 1 & - & 0&  - \\
Av. Numb. & 5 & 5  &1.21&  2.48& 5.01 & -  & 0 & - \\
 & 10 & 10  & 1  & 1.07& 10& - & 0 & - \\
\hline 
Av. Freq. Tukey-para & 0 & 0.008  &  0.008 &  0.008& 0.010 & 0.007 & 0.007&   0.007  \\
& 5 & 1  &  1 &  1& 1 & 0.007 & 0.013&   0.007  \\
& 10 & 1  &  1 &  1& 1 & 0.007 & 0.013&   0.007 \\
Av. Numb. & 5 & 5.01 &  5.01&  5.01& 1.02 & 1.95 & 1& 2.00\\
 & 10 & 10.01  & 10.01 & 9.93& 10.01 & 1& 1 & 2\\
\hline 
Av. Freq. $MAD_e$-para & 0 & 0.008  &  0.007 &  0.008& 0.010 & 0.007 & 0.007&   0.007 \\
& 5 & 1  &  1 &  1& 1 & 0.007& 0.013&   0.037  \\
& 10 & 1  &  1 &  1& 1 & 0.008 & 0.013&   0.007 \\
Av. Numb. & 5 & 5.01   &  5.01 &  5.01& 5 & 1.01 & 1.01 & 2\\
 & 10 & 10.01 & 10.01& 9.94& 10.01& 1  & 1& 2 \\
\hline 
\hline
\end{tabular}
\end{center}
\end{table}
Those conclusions are confirmed when both the other contaminations (multiplicative and point) are used for generating outliers (see Table \ref{Table7} and \ref{Table9}). 
 \begin{table}
\caption{ \label{Table7} Average frequencies (Av. Freq.) of potential outliers with the different outlier detectors and average number (Av. Numb.) of detected outliers when outliers are detected for $n=100$ and $n=1000$, with $K$ multiplicative outliers. Here $\alpha=0.007$ and $20000$ independent replications are generated.} 
\begin{center}
\footnotesize \begin{tabular}{|l|c||c|c|c|c|c|c|c|}
\hline\hline
 $n=100$ &$K$ & $\big | {\cal N}(0,1) \big |$& ${\cal E}(1)$ & $\Gamma(3)$ &  $W(3,4)$ &  $| t(2) |$ &$\log- {\cal N}(0,1)$ & $|{\cal C}|$\\
\hline \hline
Av. Freq. $\widetilde D_{J_n}$ & 0 & 0.007 &  0.008 &  0.008& 0.008 & 0.010& 0.010&   0.018  \\
& 5 & 1  &  0.620 &  1& 1 & 0.743 & 0.797&   0.221 \\
& 10 & 1  & 1 & 1& 1 & 1& 1&  0.910 \\
Av. Numb. & 5 & 5.03  &  5.04 &  5.03& 5.03 & 5.05  & 5.03 & 5.17\\
 & 10 & 10.01  & 10.01 & 10.01& 10.01& 10 & 10 & 10\\
\hline 
Av. Freq. Student-para & 0 & 0.002 &  0 &  0.001& 0.005 & - & 0& -  \\
& 5 & 0.022  &  0 &  0.021& 1 & - & 0&   -  \\
& 10 & 0.019 & 0 & 0.022& 0.190 & - & 0  &  -  \\
Av. Numb. & 5 & 1 &  0 &  1& 4.08 & -  & 0 & - \\
 & 10 & 1   & 0 &  1 & 1.04 & -  & 0 & -\\
\hline 
Av. Freq. Tukey-para & 0 & 0.022 &  0.013 &  0.014& 0.025 & 0.008 & 0.009&   0.007  \\
& 5 & 1  &  0.983 &  1& 1 & 0.065 & 0.304&   0.024  \\
& 10 & 1  &  0.980 &  1& 1 & 0.063 & 0.305&   0.022  \\
Av. Numb. & 5 & 5  &  3.83&  5& 5& 1.03 & 1.21 & 1\\
 & 10 & 9.99  & 4.45& 9.84& 10 & 1.03 & 1.19 & 1.01\\
\hline 
Av. Freq. $MAD_e$-para & 0 & 0.023  &  0.014 &  0.013& 0.025 & 0.007 & 0.009&   0.007  \\
& 5 & 1  &  0.981 &  1& 1 & 0.065& 0.303&  0.023  \\
& 10 & 1  & 0.977&  1& 1 & 0.063 & 0.303& 0.022  \\
Av. Numb. & 5 & 5  & 3.81  &  5& 5 & 1.03 & 1.21& 1\\
 & 10 & 9.99  & 4.48 &  9.77& 10 & 1.03 & 1.20 & 1.01\\
\hline 
\hline
 $n=1000$ &$K$ & $\big | {\cal N}(0,1) \big |$& ${\cal E}(1)$ & $\Gamma(3)$ &  $W(3,4)$ & $| t(2) |$ & $\log- {\cal N}(0,1)$  & $|{\cal C}|$\\
\hline \hline
Av. Freq. $\widetilde D_{J_n}$ & 0 & 0.009 &  0.009 &  0.009& 0.009 & 0.014 & 0.011&   0.016  \\
& 5 & 1  &  1&  1& 1 & 0.933 & 0.997&   0.200  \\
& 10 & 1  &  1 &  1& 1 & 1 & 1&   0.947 \\
Av. Numb. & 5 & 5.06  &  5.06 &  5.06& 5.06 & 5.08& 5.06& 5.34\\
 & 10 & 10.02 &  10.03 & 10.03& 10.03 & 10.03 & 10.03 & 10.02\\
\hline 
Av. Freq. Student-para & 0 & 0.006  &  0.004 &  0.005& 0.007 & - & -& - \\
& 5 & 1  &  0.996 &  1& 1 & - & 0 &  - \\
& 10 & 1  & 0.664 &  1& 1 & - &  0 &  - \\
Av. Numb. & 5 & 5  & 2.46 &  5& 5 & -  & 0 & - \\
 & 10 & 10  & 1.25  & 5.10& 10& - & 0 & - \\
\hline 
Av. Freq. Tukey-para & 0 & 0.008  &  0.008 &  0.008& 0.010 & 0.007 & 0.007&   0.007  \\
& 5 & 1  &  1 &  1& 1 & 0.061 & 0.441&   0.021  \\
& 10 & 1  &  1 &  1& 1 & 0.061 & 0.443&   0.021 \\
Av. Numb. & 5 & 5 &  5&  5& 5 & 1.02 & 1.32& 1.01\\
 & 10 & 10  & 9.97 & 10& 10 & 1.03 & 1.34 & 1.01\\
\hline 
Av. Freq. $MAD_e$-para & 0 & 0.008  &  0.007 &  0.008& 0.010 & 0.007 & 0.007&   0.007 \\
& 5 & 1  &  1 &  1& 1 & 0.061 & 0.441&   0.021  \\
& 10 & 1  &  1 &  1& 1 & 0.060 & 0.450&   0.022 \\
Av. Numb. & 5 & 5   &  5&  5& 5 & 1.03 & 1.32 & 1.01\\
 & 10 & 10 & 9.97& 10& 10& 1.03 & 1.34& 1.01 \\
\hline 
\hline
\end{tabular}
\end{center}
\end{table}

\begin{table}
\caption{ \label{Table9} Average frequencies (Av. Freq.) of potential outliers with the different outlier detectors and average number (Av. Numb.) of detected outliers when outliers are detected, for $n=100$ and $n=1000$, with $K$ point outliers. Here $\alpha=0.007$ and $20000$ independent replications are generated.} 
\begin{center}
\footnotesize\begin{tabular}{|l|c||c|c|c|c|c|c|c|}
\hline\hline
 $n=100$ &$K$ & $\big | {\cal N}(0,1) \big |$& ${\cal E}(1)$ & $\Gamma(3)$ &  $W(3,4)$ & $| t(2) |$ & $\log- {\cal N}(0,1)$  & $|{\cal C}|$\\
\hline \hline
Av. Freq. $\widetilde D_{J_n}$ & 0 & 0.007 &  0.008 &  0.008& 0.008 & 0.010 & 0.010&   0.018  \\
& 5 & 1  &  1 & 1& 1 & 1 & 1&   0.719 \\
& 10 & 1  & 1 & 1 & 1 & 1 & 1&  0.979 \\
Av. Numb. & 5 & 5.11 & 5.14&  5.19& 5.15 & 5.43  & 5.27 & 5.83\\
 & 10 & 10.63  & 10.67 & 10.75& 10.68& 11.10 & 10.98 & 11.25\\
\hline 
Av. Freq. Student-para & 0 & 0.002 &  0 &  0.001& 0.005 & - & 0& -  \\
& 5 & 0 &  0 &  0& 1 & - & 0&   -  \\
& 10 & 0 & 0 & 0& 0 & - & 0  &  -  \\
Av. Numb. & 5 &0 &  0&  0& 5 & -  & 0 & - \\
 & 10 & 0   & 0 & 0 & 0 & -  & 0 & -\\
\hline 
Av. Freq. Tukey-para & 0 & 0.022 &  0.013 &  0.014& 0.025 & 0.008 & 0.009&   0.007  \\
& 5 & 1  &  1 &  1& 1 & 1 & 1 &   0.005  \\
& 10 & 1  &  1 &  1& 1 & 1 & 1 &   0.004 \\
Av. Numb. & 5 & 5.01  &  5&  5.01& 5.01& 5 & 5& 1.01\\
 & 10 & 10   & 10 & 10& 10 & 10 & 10 & 1\\
\hline 
Av. Freq. $MAD_e$-para & 0 & 0.023  &  0.014 &  0.013& 0.025 & 0.007 & 0.009&   0.007  \\
& 5 & 1  &  1 &  1& 1 & 1& 1 &  0.006  \\
& 10 & 1  &  1 &  1& 1 & 1 & 1 & 0.006 \\
Av. Numb. & 5 & 5.01   &  5.01 &  5.01& 5.01 & 5.01 & 5.01& 1.03\\
 & 10 & 10  & 10 &  10& 10 & 10 & 10 & 1\\
\hline 
\hline
 $n=1000$ &$K$ & $\big | {\cal N}(0,1) \big |$& ${\cal E}(1)$ & $\Gamma(3)$ &  $W(3,4)$ & $| t(2) |$  & $\log- {\cal N}(0,1)$ & $|{\cal C}|$\\
\hline \hline
Av. Freq. $\widetilde D_{J_n}$ & 0 & 0.009 &  0.009 &  0.009& 0.009 & 0.014 & 0.011&   0.016  \\
& 5 & 1  &  1&  1& 1 & 1& 1&   0.246 \\
& 10 & 1  &  1 &  1& 1 & 1 & 1 &   0.709 \\
Av. Numb. & 5 & 5.23 &  5.27 &  5.32& 5.26 & 5.69 & 5.28& 7.76\\
 & 10 & 11.03  &  11.11 & 11.22& 11.13 & 11.09 & 11.35& 12.73\\
\hline 
Av. Freq. Student-para & 0 & 0.006  &  0.004 &  0.005& 0.007 & - &0 & - \\
& 5 & 1  &  1 &  1& 1 & - & 0 &  - \\
& 10 & 1  & 1 &  1& 1 & - & 0 &  - \\
Av. Numb. & 5 & 5  & 5 &  5& 5 & -  & 0 & - \\
 & 10 & 10  & 10  & 10& 10& - & 0 & - \\
\hline 
Av. Freq. Tukey-para & 0 & 0.008  &  0.008 &  0.008& 0.010 & 0.007 & 0.007&   0.007  \\
& 5 & 1  &  1 &  1& 1 & 1 & 1 &   0.007 \\
& 10 & 1  &  1 &  1& 1 & 1 & 1&   0.007 \\
Av. Numb. & 5 & 5.01 &  5.01&  5.01& 5.01 & 5.01 & 5.01& 1\\
 & 10 & 10.01  & 10.01 & 10.01& 10.01 & 10.01 & 10.01 & 1.01\\
\hline 
Av. Freq. $MAD_e$-para & 0 & 0.008  &  0.007 &  0.008& 0.010 & 0.007 & 0.007&   0.007 \\
& 5 & 1  &  1 &  1& 1 & 1& 1 &   0.007\\
& 10 & 1  &  1 &  1& 1 & 1 & 1 &   0.008 \\
Av. Numb. & 5 & 5.01   &  5.01 &  5.01& 5.01 & 5.01 & 5.01 & 1\\
 & 10 & 10.01 & 10.01& 10.01& 10& 10.01  & 10.01 & 1.01 \\
\hline 
\hline
\end{tabular}
\end{center}
\end{table}
\subsubsection*{Conclusions of simulations}
The log-ratio detector $\widehat D_{J_n}$ provide a very good trade-off for the detection of outliers for a very large choice of probability distributions compared with usual outliers detectors or their fitted versions. The second non-parametric outlier detector, LOF, is not really as accurate (especially for $K=10$), its size is not controlled and results did not depend on the choice of $\alpha$ (a theoretical study should be done for writing the threshold as a function of $\alpha$). 
\section{Application to real data}\label{applis}
We apply the theoretical results to real datasets of detailed data on individual transactions in the used car market. The purpose of the experiment was to detect as many outliers as possible. The original dataset  contains information about $n=6079$ transactions on the car  \textit{Peugeot 207 1.4 HDI 70 Trendy Berline } including  year and month which is the manufacture date, the price, and the number of kilometres driven. We then define $3$ variables: Age (the age of the car, in months), Price (in euros) and Mileage (in km). We chose these cars because they were advertised often enough to permit us to create a relatively homogeneous sample. Figure \ref{Data_Peug207} depicts the relationship between the price and some variables:  Price with Mileage, Price with Age. Such data were collected by Autobiz society, and can be used for forecasting the price of a car following its age and mileage. Hence it is crucial to construct a model for the price from a reliable data set including the smallest number of outliers. \\
\begin{figure}
\begin{center}
\includegraphics[scale=0.25]{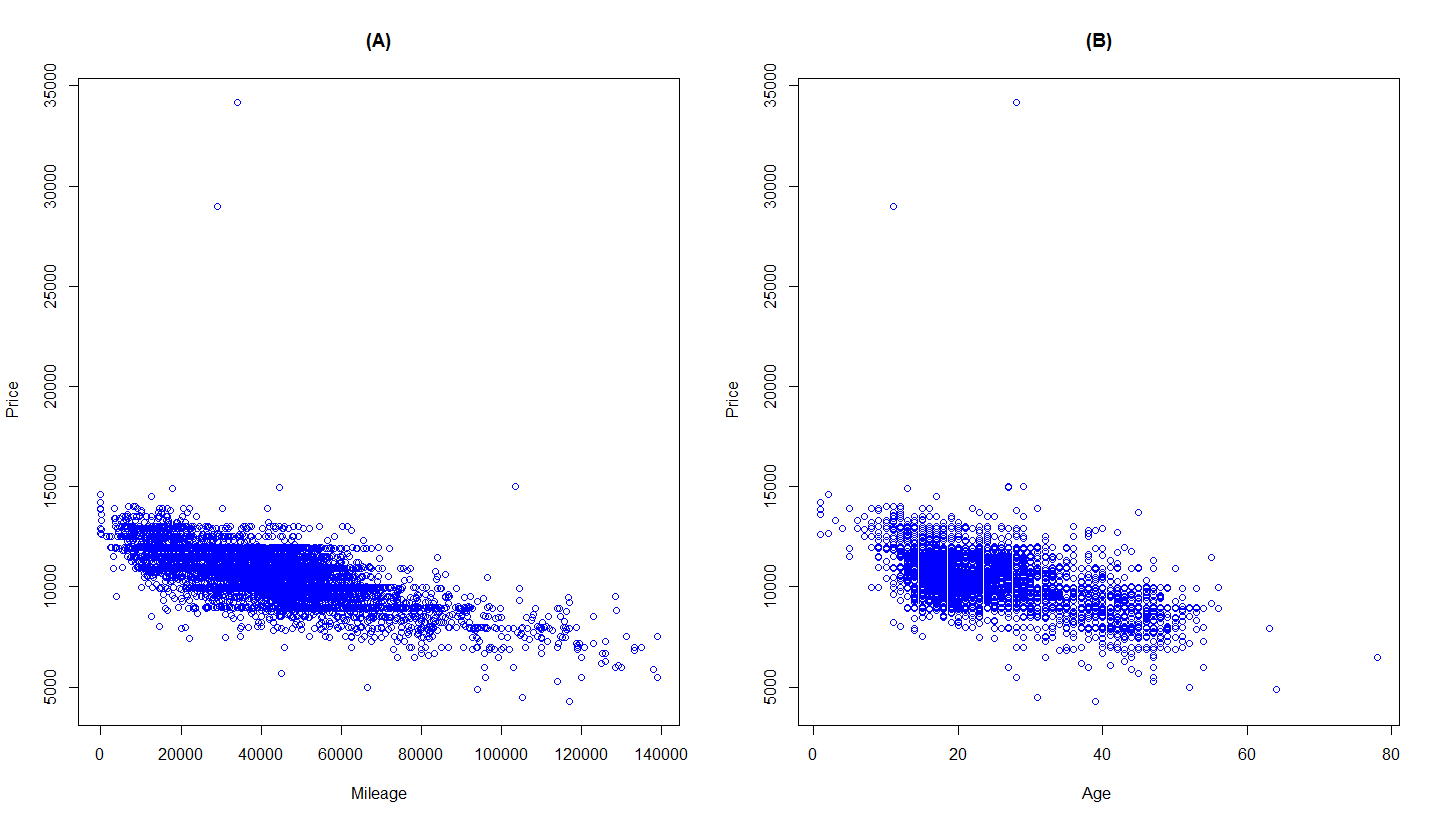}
\caption{\label{Data_Peug207}\textit{Relationship between the dependent variables and the regressors: Price with Mileage (left), Price with Age (right). }} 
\end{center}
\end{figure}
We now apply our test procedure to identify eventual outlying observations or atypical combination between variables. After preliminary studies, we chose two significant characteristics for each car of the sample. The first one is the number of kilometres per month. The second one is the residual obtained, after an application of the exponential function, from a robust quantile regression between the logarithm of the price as the dependent variable and Age and Mileage as exogenous variables (an alternative procedure for detecting outliers in robust regression has been developed in Gnanadesikan and Kettenring, 1972). The assumption of independence is plausible for both these variables' residuals. Figure \ref{Data_Peug207_Analysed} exhibits the boxplots of the  distributions of those two variables.  \\
\begin{figure}
\begin{center}
\includegraphics[scale=0.6]{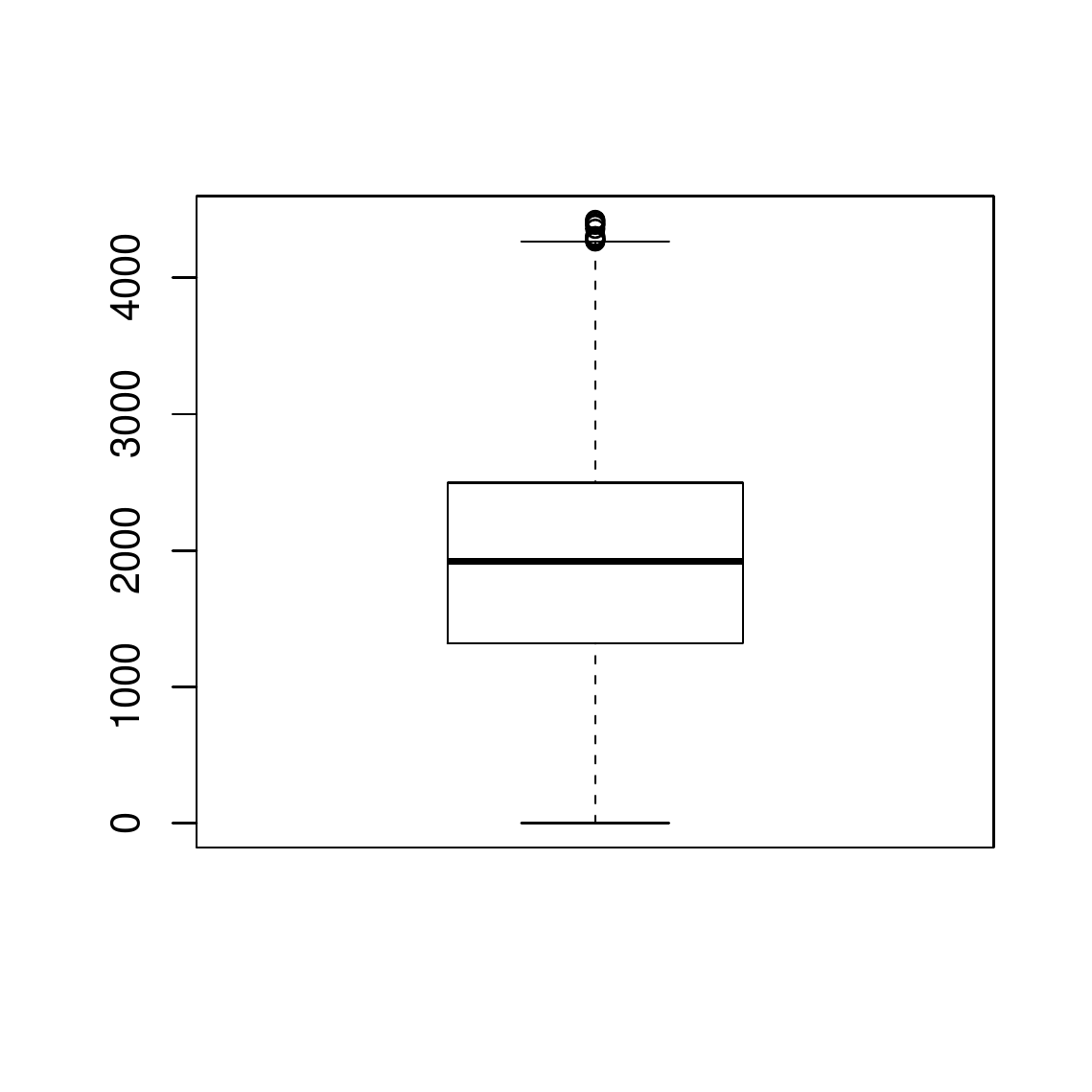} \includegraphics[scale=0.5]{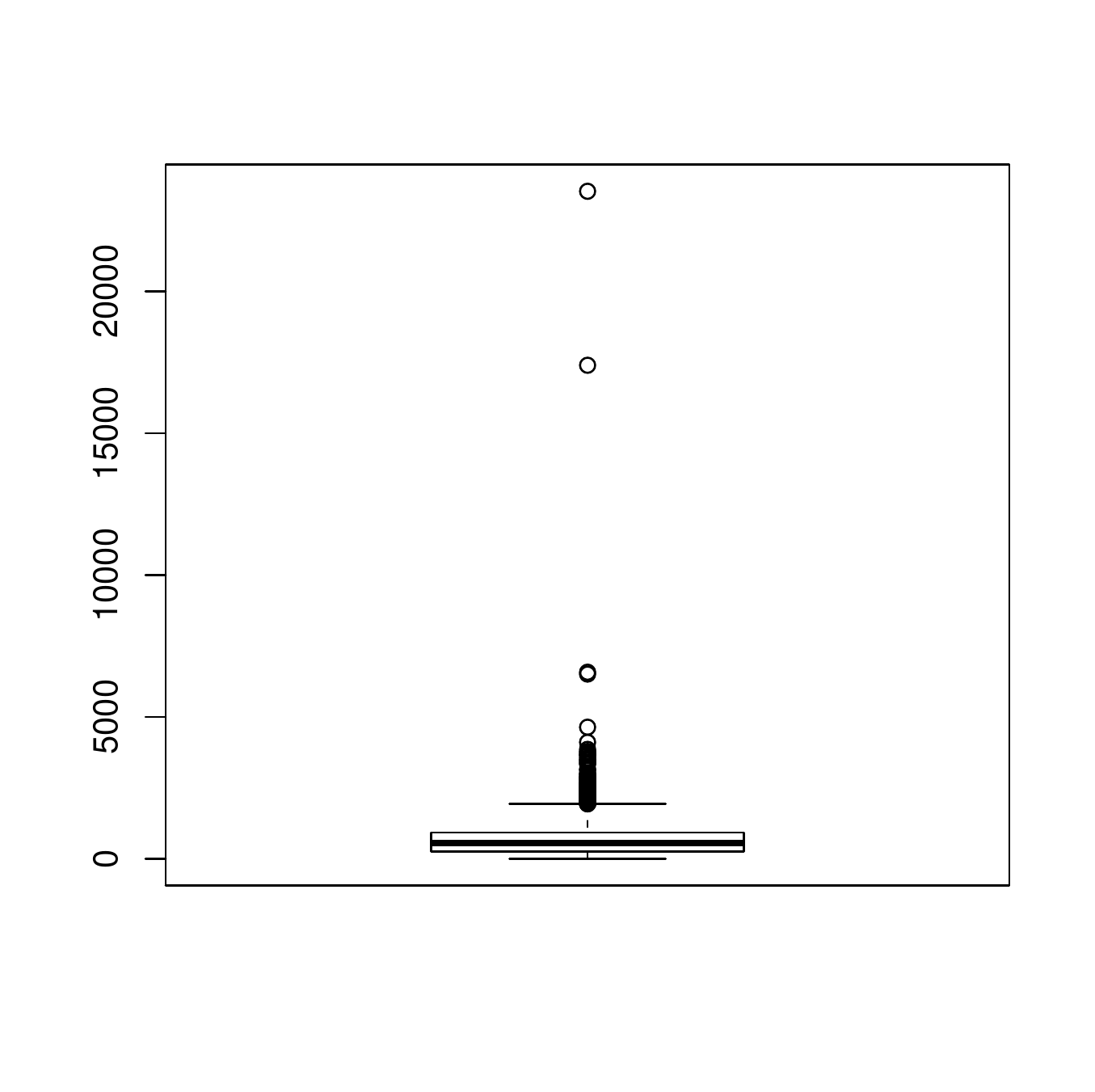}
\caption{\label{Data_Peug207_Analysed}\textit{Boxplots of kilometres per month (left) and of absolute values of   quantile regression residuals (right).}}
\end{center}
\end{figure}
The outlier test $\widehat D_{J_n}$ is carried out on those two variables with $J_n=20$ (given by the empirical choice obtained in Section  \ref{simu} with $n=6079$). As the sample size is large, we can accept to eliminate data detected as outliers while there are not really outliers and we chose $\alpha=0.05$. The results are presented in Tables \ref{ResultatTest}, \ref{Outliers_Kmm}  and \ref{Outliers_Residuals}. Note that, concerning the study of kilometres per month (km/m), we directly applied the test to this variable for detecting eventual "too" large values, but also to $\max(km/m)-(km/m)$ for detecting eventual "too" small values.
\subsubsection*{Conclusions of the applications} 
We first remark that we did not get the same outliers from the different analysis. It could be expected  because the test on residuals worked as a multivariate test and identify  atypical association between the three variables Age, Mileage and Price while the tests done on kilometres/month identifies outlying values in a bivariate case {\it i.e.} a typical association between the two variables Age and Mileage. From a practitioner's point of view it may be advisable to apply the test for the two cases together one by one to be sure to detect the largest number of outliers. A second remark concerns the "type" of the detected outliers. We can state that concerning kilometres/month, outliers are simply the largest values (the test did not identify outliers for "too" small values). But for the regression residuals, the detected outliers clearly correspond to typing errors on the prices (the prices have been replaced by the mileages!). Thus, two kinds of outliers have been detected. 
\begin{table}
\caption{\label{ResultatTest} The outlier test $\widehat D_{J_n}$ applied to $3$ samples: the number of kilometres per month ($km/m$), $\max(km/m)-km/m$ and the residuals obtained from a quantile regression of the log-prices onto the age and the mileage.}
\begin{center}
\begin{tabular}{|l|l|l|c|r|} 
\hline  Sample & $ J_n$ & $\widehat{D}_{J_n}$ &  t  & Outliers \\
\hline  km/m (Sup)  & 20 & 6.7232 & 5.96721 & $ n=6 $ \\ 
\hline  km/m (Inf)  & 20 & 5.1200 & 5.96721 & $n=0 $ \\
\hline  Res & 20 & 6.3322 & 5.96721 & $ n=2 $ \\ 
\hline
\end{tabular} 
\end{center}
\end{table}
~\\
\begin{table}
\caption{\label{Outliers_Kmm} Detailed analysis of the detected outliers obtained from the sample of kilometers per month (large values).}
\begin{center}
\begin{tabular}{|l|l|l|c|r|r|}
\hline Detected Outliers & Price & Mileage & Age & Kilometers per Month & Predicted Price \\
\hline outlier(1) &9590	&70249&	16&	4391&	9909 \\
\hline outlier(2) &11690&	61484&	14&	4392&	10286 \\ 
\hline outlier(3) &10490&	61655&	14&	4404&	10280 \\ 
\hline outlier(4) &9390	&61891&	14&	4421&	10272 \\ 
\hline outlier(5) &11500&	39826&	9	&4425&	11285 \\ 
\hline outlier(6) &11900&	65411&	15&	4361&	10111 \\ 
\hline 
\end{tabular} 
\end{center}
\end{table}
~\\
\begin{table}
\caption{\label{Outliers_Residuals} Detailed analysis of outliers detected from the residual's sample.}
\begin{center}
\begin{tabular}{|l|l|l|c|r|}
 \hline Detected Outliers & Price &	Mileage	& Age & 	Predicted Price \\
 \hline Outlier(1) & 34158 &	34158	& 28	& 10626 \\
 \hline Outlier(2) & 29000	& 29000	 & 11	 & 11600 \\
\hline 
\end{tabular} 
\end{center}
\end{table}

\newpage
\section{Proofs} \label{proofs}
\begin{proof}[Proof of Proposition \ref{prop1}]
We begin by using the classical following result (see for example Embrechts {\it et al.} 1997):
\begin{eqnarray}\label{emb}
\Big ( X_{(n-J)},X_{(n-J+1)},\cdots, X_{(n)} \Big )\stackrel{\cal D}{=} \Big ( G^{-1}\big ( \Gamma_{J+1 } / \Gamma_{n +1 } \big ) ,G^{-1}\big ( \Gamma_{J } / \Gamma_{n +1 }  \big), \cdots, G^{-1}\big ( \Gamma_{1} / \Gamma_{n +1 }  \big) \Big ),
\end{eqnarray}
where $(\Gamma_i)_{i \in \N^*}$ is a sequence of random variables such as $\Gamma_i=E_1+\cdots+E_i$ for $i \in \N^*$ and $(E_i)_{j\in \N^*}$ is a sequence of i.i.d.r.v. with distribution ${\cal E}(1)$. Consequently, we have
$$
\Big ( \tau_{n-J},\tau_{n-J+1},\cdots, \tau_{n-1} \Big )\stackrel{\cal D}{=} \Big (\frac { G^{-1}\big ( \Gamma_{J} / \Gamma_{n +1 } \big ) } { G^{-1}\big ( \Gamma_{J +1 } / \Gamma_{n +1 } \big ) },\frac { G^{-1}\big ( \Gamma_{J -1} / \Gamma_{n +1 } \big ) } { G^{-1}\big ( \Gamma_{J } / \Gamma_{n +1 } \big ) }, \cdots, \frac { G^{-1}\big ( \Gamma_{1 } / \Gamma_{n +1 } \big ) } { G^{-1}\big ( \Gamma_{2 } / \Gamma_{n +1 } \big ) }\big) \Big).
$$
But for $j \in \N^*$, $\displaystyle G^{-1}\big ( \Gamma_{j} / \Gamma_{n +1 } \big )=G^{-1}\Big (\frac 1 {\Gamma_{n+1}} \times \Gamma_{j} \Big )$. From the strong law of large numbers, $\Gamma_{n+1} \limiteasn \infty$, therefore since  $G^{-1} \in A_1$, we almost surely obtain:
\begin{eqnarray*}
G^{-1}\big ( \Gamma_{j} / \Gamma_{n +1 } \big )=f_1\big (\frac 1 {\Gamma_{n+1}} \big )\times \Big (1 - \frac {f_2(\Gamma_{j} )}{\log(\Gamma_{n+1} )}+ O\big ( \frac 1 {\log^2 (\Gamma_{n+1})} \big )\Big ).
\end{eqnarray*}
Using once again the strong law of large numbers, we have $\Gamma_{n+1} \sim n \limiten \infty$ almost surely. Hence, we can write for all  $j=1,\cdots, J$,
\begin{eqnarray}
\nonumber \frac { G^{-1}\big ( \Gamma_{j} / \Gamma_{n +1 } \big ) } { G^{-1}\big ( \Gamma_{j +1 } / \Gamma_{n +1 } \big ) } &=&
\frac {1 - \frac {f_2(\Gamma_{j} )}{\log(\Gamma_{n +1 } )}+ O\big ( \frac 1 {\log^2 (\Gamma_{n +1 } )} \big )}{1 - \frac {f_2(\Gamma_{j+1} )}{\log(\Gamma_{n +1 } )}+ O\big ( \frac 1 {\log^2 (\Gamma_{n +1 } )} \big )} \\
\label{dl1} &=&1 + \frac {f_2(\Gamma_{j+1} )-f_2(\Gamma_{j} )}{\log(\Gamma_{n +1 } )}+ O\big ( \frac 1 {\log^2 (\Gamma_{n +1 } )} \big ),
\end{eqnarray}
using a Taylor expansion.  
By considering now the family $(\tau'_j)_j$ defined by $\tau'_j=(\tau_j-1) \log (n)$ and the limit of the previous expansion, we obtain 
\begin{equation}\label{tauprime}
\big (\tau'_{n-J},\tau'_{n-J+1},\cdots, \tau'_{n-1} \big ) \limiteloin \Big (f_2(\Gamma_{J+1} )-f_2(\Gamma_{J} )\, , \, f_2(\Gamma_{J} )-f_2(\Gamma_{J-1} )\, , \cdots,\,  f_2(\Gamma_{2} )-f_2(\Gamma_{1} ) \Big ) .
\end{equation}
The function  $(x_1,\cdots,x_J) \in \R ^J \mapsto \max (x_1,\cdots,x_J) \in \R$ is a continuous function on $\R^J$ and therefore we obtain \eqref{asympto1}.
\end{proof}
~\\
\begin{proof}[Proof of Proposition \ref{prop2}]
We use the asymptotic relation \eqref{asympto1}. Since $G^{-1} \in A'_1$, for $k=1,\cdots,J$,
$$
f_2 (\Gamma_{k+1})-f_2 (\Gamma_{k})=C_2 \, \log \big (\Gamma_{k+1}/\Gamma_k \big )=C_2 \,  \log \big (\Gamma_{k+1} / \Gamma_{J+1} \big )-C_2 \, \log \big (\Gamma_k / \Gamma_{J+1} \big ), 
$$ 
But the random variable $f_2 (\Gamma_{k+1})-f_2 (\Gamma_{k})=f_2(\Gamma_k+E_{k+1})-f_2(\Gamma_k)$ is absolutely continuous with respect to Lebesgue measure since $\Gamma_k$ and $E_{k+1}$ are independent random variables.
Using once again the property \eqref{emb}, and since  for an exponential distribution ${\cal E}(1)$, $G^{-1}(x)=-\log(x)$, then 
$$
\Big (-\log\big (\Gamma_J/\Gamma_{J+1}\big ) \, ,\, -\log\big (\Gamma_{J-1}/\Gamma_{J+1}\big )\, , \cdots,\,  -\log\big (\Gamma_1/\Gamma_{J+1}\big )\Big )
\stackrel{\cal D}{=}  \Big (E'_{(1)},E'_{(2)},\cdots,E'_{(J)}\Big ).
$$
where $(E'_j)_j$ is a sequence of i.i.d.r.v. following a  ${\cal E}(1)$ distribution and $E'_{(1)}\leq  E'_{(2)} \leq \cdots \leq E'_{(J)}$ is the order statistic from $(E'_1,\cdots,E'_J)$. Consequently,
\begin{eqnarray*}
&& \hspace{-0.7cm}\Big (f_2 (\Gamma_{J+1})-f_2 (\Gamma_{J}),\, f_2(\Gamma_{J} )-f_2(\Gamma_{J-1} )\, , \cdots,\,  f_2(\Gamma_{2} )-f_2(\Gamma_{1} ) \Big ) \\
  &&\hspace{0.7cm} =C_2  \Big (-\log \big (\Gamma_{J} / \Gamma_{J+1} \big )\, , \,\log \big (\Gamma_{J} / \Gamma_{J+1} \big )-\log \big (\Gamma_{J-1} / \Gamma_{J+1} \big )\,, \cdots, \,\log \big (\Gamma_{2} / \Gamma_{J+1} \big )-\log \big (\Gamma_1 / \Gamma_{J+1} \big ) \Big ) \\
& & \hspace{0.7cm}\stackrel{\cal D}{=} C_2 \, \Big (E'_{(1)}\, , \, E'_{(2)}-E'_{(1)}\, ,\, \cdots,E'_{(J)}-E'_{(J-1)} \Big )
\end{eqnarray*}
With $y=x/C_2$ and using \eqref{asympto1}, this implies 
\begin{eqnarray*}
\P \big (\max_{j=n-J,\cdots,n-1} \{\tau_j'\} \leq x \big )  &\limiten& \P \big ( E'_{(1)} \leq y, \, E'_{(2)} \leq  y+E'_{(1)}, \cdots, E'_{(J)} \leq y+E'_{(J-1)} \big ) \\
 &\limiten& J! \, \, \P \big (  E'_{1} \leq y, \, E'_{1} \leq E'_{2} \leq y+E'_{1}, \cdots, E'_{J-1} \leq E'_{J} \leq y+E'_{J-1}   \big ). 
\end{eqnarray*}
The explicit computation of this probability is possible. Indeed:
\begin{eqnarray*}
&& \P \big (  E'_{1} \leq y, \, E'_{1} \leq E'_{2} \leq y+E'_{1}, \cdots, E'_{J-1} \leq E'_{J} \leq y+E'_{J-1}  \big ) \\
&& \hspace{2cm} = \int_{0}^y \int_{e_1}^{y+e_1}\int_{e_2}^{y+e_2} \cdots \int_{e_{J-2}}^{y+e_{J-2}} \int_{e_{J-1}}^{y+e_{J-1}}  e^{-e_1}   e^{-e_2}  e^{-e_3}   \cdots e^{-e_{J-1}}e^{-e_{J}} de_1 de_2 de_3 \cdots     de_{J-1}   de_{J} \\
&& \hspace{2cm} =\big ( 1 - e^{-y} \big )   \int_{0}^y \int_{e_1}^{y+e_1}\int_{e_2}^{y+e_2} \cdots  \int_{e_{J-2}}^{y+e_{J-2}}  e^{-e_1}  e^{-e_2}    e^{-e_3} \cdots     e^{-2e_{J-1}} de_1de_2 de_3  \cdots de_{J-1} \\
&& \hspace{2cm} = \frac 1 2 \big ( 1 - e^{-y} \big ) \big ( 1 - e^{-2y} \big )   \int_{0}^y  \int_{e_1}^{y+e_1} \int_{e_2}^{y+e_2} \cdots  \int_{e_{J-3}}^{y+e_{J-3}}   e^{-e_1} e^{-e_2}   e^{-e_3}  \cdots     e^{-3e_{J-2}} de_1 de_2 de_3  \cdots de_{J-2} \\
&& \hspace{2cm} = \qquad \qquad \vdots \qquad \qquad  \vdots \qquad \qquad  \vdots \qquad \qquad \vdots \qquad  \qquad \vdots \qquad \qquad \vdots \qquad  \qquad \vdots \\
&& \hspace{2cm} = \frac 1 {(J-2)!} \,  \big ( 1 - e^{-y} \big ) \big ( 1 - e^{-2y} \big )\times  \cdots \times  \big ( 1 - e^{-(J-2)y} \big )  \int_{0}^y \int_{e_1}^{y+e_1} e^{-e_1}   e^{-(J-1)e_2} de_1 de_2 \\
&& \hspace{2cm} =  \frac 1 {(J-1)!} \,  \big ( 1 - e^{-y} \big ) \big ( 1 - e^{-2y} \big )\times  \cdots \times  \big ( 1 - e^{-(J-1)y} \big )  \int_{0}^y e^{-Je_1} de_1 \\
&& \hspace{2cm} =  \frac 1 {J!} \,  \big ( 1 - e^{-y} \big ) \big ( 1 - e^{-2y} \big )\times  \cdots \times  \big ( 1 - e^{-Jy} \big ) .
\end{eqnarray*}
Then, we obtain \eqref{asympto2}.

\end{proof}
~\\
\begin{proof}[Proof of Proposition \ref{prop3}]
Such a result can be obtained by modifications of  Propositions \ref{prop1} and \ref{prop2}. Indeed, we begin by extending Proposition \ref{prop1} in the case where $J_n \limiten \infty$ and $J_n/\log n \limiten 0$. This is possible since $\Gamma_{n+1} /n =1 + n^{-1/2}\varepsilon_n$ with $\varepsilon_n \limiteloin {\cal N}(0,1)$ from usual Central Limit Theorem. Using the Delta-method, we also obtain $\log (\Gamma_{n+1} /n)=n^{-1/2}\varepsilon'_n$ with $\varepsilon'_n \limiteloin {\cal N}(0,1)$. Hence, for any $j=1,\cdots,J_n$, from \eqref{tauprime},
\begin{eqnarray*}
\tau'_{n-j} &\stackrel{\cal D} =&\log (n) \, \Big ( \frac { G^{-1}\big ( \Gamma_{j} / \Gamma_{n +1 } \big ) } { G^{-1}\big ( \Gamma_{j +1 } / \Gamma_{n +1 } \big )} -1 \Big ) \\
&\stackrel{\cal D} =&f_2(\Gamma_{j+1} )-f_2(\Gamma_{j} )+ O\big ( \frac 1 {\log (n)} \big )
\\
&\stackrel{\cal D} =&C_2\, \log(\Gamma_{j+1} /\Gamma_{j_n} )+ O\big ( \frac 1 {\log (n)} \big ).
\end{eqnarray*}
Denote $F_n$ the cumulative distribution function of $\big (\tau'_{n-J_n},\cdots,\tau'_{n-1} \big )$, and  $\widetilde F_n$ the one of $\big (f_2(\Gamma_{J_n+1} )-f_2(\Gamma_{J_n} ),\cdots,f_2(\Gamma_{2} )-f_2(\Gamma_{1} ) \big )=C_2\big (\log(\Gamma_{J_n+1} /\Gamma_{J_n} ),\cdots,\log(\Gamma_{2} /\Gamma_{1} ) \big )$. Then, using the second equation of the proof of Proposition \ref{prop1}, for  all $(x_1,\cdots,x_{J_n}) \in (0,\infty)^{J_n}$, 
$$
F_n(x_1,\cdots,x_{J_n})=\widetilde F_n (x_1+u^1_n,\cdots,x_{J_n}+u^{J_n}_n),
$$
with $u^i_n=O\big ( \frac 1 {\log (n)} \big )$. But it is clear that the probability measure of  $\big (f_2(\Gamma_{J_n+1} )-f_2(\Gamma_{J_n} ),\cdots,f_2(\Gamma_{2} )-f_2(\Gamma_{1} ) \big )$ is absolutely continuous with respect to the  Lebesgue measure on $R^{J_n}$. Thus, the partial derivatives of the function $\widetilde F_n$ exist. Then from a Taylor-Lagrange expansion,
$$
\widetilde F_n (x_1+u^1_n,\cdots,x_{J_n}+u^{J_n}_n) = \widetilde F_n (x_1,\cdots,x_{J_n})+ \sum_{j=1}^{J_n} u^{j}_n \times \frac {\partial}{\partial x_j}F_n (x'_1,\cdots,x'_{J_n}),
$$
where $(x'_1,\cdots,x'_{J_n}) \in (0,\infty)^{J_n}$. Hence, we obtain $\Big |\sum_{j=1}^{J_n} u^{j}_n \times \frac {\partial}{\partial x_j}F_n (x'_1,\cdots,x'_{J_n})\Big | \leq C \sum_{j=1}^{J_n} u^{j}_n \leq C' \frac{J_n}{\log n}$ for some positive real numbers $C$ and $C'$. Consequently, we have:
$$
F_n(x_1,\cdots,x_{J_n}) \simn  \widetilde F_n (x_1,\cdots,x_{J_n}).
$$
Now, we are going back to the proof of Proposition \ref{prop2} by computing $\widetilde F_n (x_1,\cdots,x_{J_n})$. This leads to compute the following integral:
$$
\int_{0}^{y_1} \int_{e_1}^{y_2+e_1} \int_{e_2}^{y_3+e_2} \cdots \int_{e_{J_n-2}}^{y_{J_n-1}+e_{J_n-2}} \int_{e_{J_n-1}}^{y_{J_n}+e_{J_n-1}}   e^{-e_1}  e^{-e_2}    e^{-e_3}  \cdots  e^{-e_{J_n-1}}    e^{-e_{J_n}} de_1de_2de_3 \cdots de_{J_n-1} de_{J_n},
$$
with $y_i=x_i/C_2$, and with the same iteration than in the proof of  Proposition \ref{prop2}, we obtain
$$
\widetilde F_n (x_1,\cdots,x_{J_n})= \prod_{j=1}^{J_n} \big ( 1 - e^{-jx_{J_n-j+1}/C_2}\big ).
$$
Then, by considering the vector $((n-j)\tau'_j)_{n-J_n\leq j \leq n-1}$ and the continuity of the function $\max$, we have for all $x\geq 0$
\begin{equation}\label{loi}
\Pr \big ( \max_{j=n-J_n,\cdots,n-1} \{(n-j)\tau'_j\} \leq x \big ) \simn  \big ( 1 - e^{-x/C_2}\big )^{J_n}.
\end{equation}
To achieve the proof, we use the Slutsky's Theorem. Indeed, we have $\overline{s}_{J_n}=\frac 1 {J_n} \, \sum_{j=1}^{J_n} j\,  \tau'_{n-j}$ with $J_n \to \infty$. As a consequence,
\begin{eqnarray}
\overline{s}_{J_n}&=& \frac {C_2} {J_n} \, \sum_{j=1}^{J_n} j \log\big (\Gamma_{j+1} /\Gamma_{j} \big )+ O\big ( \frac {\sum_{j=1}^{J_n} j} {J_n\log (n)} \big )\\
&= &\frac {C_2} {J_n} \, \sum_{j=1}^{J_n} j \log\big (\Gamma_{j+1} /\Gamma_{j} \big )+ O\big ( \frac {J_n} {\log (n)} \big ) \\
& =& \frac {1} {J_n} \, \sum_{j=1}^{J_n} \varepsilon_j+ O\big ( \frac {J_n} {\log (n)} \big ) ,
\end{eqnarray}
with $(\varepsilon_1,\cdots,\varepsilon_{J_n})$ a family of i.i.d.r.v. with  exponential distribution of parameter $1/C_2$. Using the Law of Large Numbers and the condition $J_n/\log (n) \limiten 0$, we deduce that
\begin{equation}\label{lgn}
\overline{s}_{J_n} \limiteproban C_2.
\end{equation}
Then the proof can be concluded using \eqref{loi}, \eqref{lgn} and Slutsky's Theorem. 
\end{proof}
~\\
\begin{proof}[Proof of Proposition \ref{prop4}]
We begin by considering the proof of Proposition \ref{prop1}. Hence, since $G^{-1} \in A_2$ and always with $\Gamma_{n+1} \simn n$ almost surely, we obtain for $k=1,\cdots,J$,
\begin{eqnarray*}
\log \Big ( \frac {G^{-1} \big (\Gamma_k/\Gamma_{n+1} \big ) }{G^{-1} \big (\Gamma_{k+1}/\Gamma_{n+1} \big ) } \Big )&=& \log \Big ( \frac {\Gamma_k^{-a} g_1(1/\Gamma_{n+1})\big (1+ O(1/\log(\Gamma_{n+1})) \big ) }{\Gamma_{k+1}^{-a} g_1(1/\Gamma_{n+1})\big (1+ O(1/\log(\Gamma_{n+1}) )\big )}\Big )\\
&=& -a \, \log \big ( \Gamma_k/\Gamma_{k+1} \big )+O(1/\log(n)).
\end{eqnarray*}
Then, we directly use the proof of Proposition \ref{prop2}. 
\end{proof}
~\\
\begin{proof}[Proof of Theorem \ref{theo}]
First consider the case $G^{-1} \in A'_1$. Using the proof of Proposition \ref{prop1}, we have
$$
\log \Big ( \frac { G^{-1}\big ( \Gamma_{j} / \Gamma_{n +1 } \big ) } { G^{-1}\big ( \Gamma_{j +1 } / \Gamma_{n +1 } \big ) } \Big ) =\frac {f_2(\Gamma_{j+1} )-f_2(\Gamma_{j} )}{\log(n)}+ O\big ( \frac 1 {\log^2 (n)} \big ).
$$
Consequently, using $G^{-1} \in A'_1$ and therefore the definition of $f_2$, we obtain:
$$
\log (\tau_j) =\frac {C_2}{\log(n)} \log \big (\Gamma_{j+1}/\Gamma_{j} \big )+ O\big ( \frac 1 {\log^2 (n)} \big ).
$$
To prove \eqref{asympto5}, it is sufficient to use again the proof of Proposition \ref{prop3}, to normalize the numerator and denominator with $\log n$ and therefore to consider $\log n \times \widehat L_{J_n}$, which converges in probability to $\log 2/C_2$ (indeed, the median of the sample converges to $\log2 / \lambda$ which is the median of the distribution of the ${\cal E}(\lambda)$ distribution).\\
When  $G^{-1} \in A_2$, we can use the same argument that the ones of the proof of Proposition \ref{prop3} with $C_2$ replaced by $a$ (the reminder $1/\log n$ obtained from the definition of  $A_2$ allows the achievement the proof when 
$J_n$ is negligible compared to $\log n$).    
\end{proof}

\vskip0.2cm
\noindent {\bf Acknowledgements.} The authors are extremely grateful to the referees for their impressive and scrupulous reports  with many relevant corrections, suggestions and comments which helped to improve the contents
 of the paper. We also would like to thank Mia Hubert for having kindly provided R routines.\\

{}
\end{document}